\def\maxwidth{ %
	\ifdim\Gin@nat@width>\linewidth
	\linewidth
	\else
	\Gin@nat@width
	\fi
}
\newcommand{\blind}{1}
\newcommand{\R}{\mathbb{R}}
\newcommand{\N}{\mathbb{N}}
\newcommand{\C}{\textbf{C}}
\newcommand{\rr}{ \|\textbf{h}\| }
\newcommand{\hh}{\textbf{h} }
\newcommand{\s}{s}
\newcommand{\aspropto}{\overset{\cdot}{\sim}}
\newcommand{\pkg}[1]{{\fontseries{b}\selectfont #1}} 
\newtheorem{theorem}{Theorem}
\newtheorem{lemma}{Lemma}
\newtheorem{remark}{Remark}
\newtheorem{corollary}{Corollary}
\begin{document}
	
\def\spacingset#1{\renewcommand{\baselinestretch}%
		{#1}\small\normalsize} \spacingset{1}

	\title{\bf Bivariate Covariance Functions of P\'olya Type }
\author{Olga Moreva$^1$\hspace{.2cm}\\
	and \\
	Martin Schlather$^2$ \\
	$^1$ Daimler AG, Werk Sindelfingen, Germany,\\ olga.moreva@daimler.com\\
	$^2$ Institute for Mathematics, University of Mannheim, Germany, \\ schlather@math.uni-mannheim.de }
\maketitle

	\begin{abstract}
		
	We provide sufficient conditions of P\'olya type which guarantee the positive definiteness of a  $2\times 2$-matrix-valued function in $\R$ and $\R^3$.
	Several bivariate covariance models have been proposed in literature, where all components of the covariance matrix are of the same parametric family, such as the bivariate Mat\'{e}rn model. Based on the  P\'olya type conditions, we introduce two novel bivariate parametric covariance models of this class, the  powered exponential (or stable) covariance model and the generalized Cauchy covariance model. Both models allow  for flexible smoothness, variance, scale, and cross-correlation parameters. The smoothness parameters are in $(0, 1]$. 	Additionally, the bivariate generalized Cauchy model  allows for distinct long range parameters.  We also show that the univariate spherical model can be generalized to the bivariate case within the above class only in a trivial way. 	In a data example on the content of copper and zinc in the top soil of Swiss Jura we compare the bivariate powered exponential model to the traditional linear model of coregionalization and the bivariate Mat\'{e}rn model.
\end{abstract}
	
	\noindent%
	{\it Keywords:}  cokriging; multivariate covariance function;  multivariate Gaussian random field; multivariate geostatistics; spatial cross-correlation.
	\vfill

{\centering		\section{Introduction}\label{sec:intro}}

	Multivariate data measured in space arise in a variety of disciplines including  soil science, ecology, mining, geology and meteorology. Air temperature and pressure in a certain geographical region or the content of two metals in a geological deposit  are examples of spatial processes with two components. Spatial dependence within and between the components is exploited in particular when the component of interest is not exhaustively sampled, whereas the measurement of other components can be easily carried out, e.g.\ in soil sciences \citep{goovaerts1999geostatistics, atkinson1992cokriging}. An appropriate multivariate spatial covariance model gives more sensible results for spatial interpolation than univariate models, see for example \cite{cressie2016multivariate}. In environmental and climate sciences it is important to model spatial meteorological data jointly in order to reflect spatial dependence within and between components adequately (see the discussions in \cite{feldmann2014}, \cite{berrocal2007}, and \cite{gel2004calibrated}); otherwise the obtained results might be unsound.

    We  focus on  a Euclidean space, $\R^n,$ $n \le 3$. Spatial data are assumed to stem from a multivariate Gaussian random field $\textbf{Z(\textbf{x}) }= (Z_1(\textbf{x}), \dots,  Z_m(\textbf{x})),$ $\textbf{x} \in \R^n,$ $m \in \N$, which is uniquely characterized by its mean and its covariance function. For simplicity, we assume in the theoretical part of the paper  that the random field has zero mean. A covariance function $\C$ of a multivariate field is a matrix-valued function, whose diagonal elements are the marginal covariance functions and the off-diagonal elements  are the cross-covariance functions. A covariance function  $\C = [C_{ij}]_{i,  j = 1}^m$ is called stationary if for any   $\textbf{x}, \textbf{h} \in \R^n$ and $i,j = 1, \dots, m$  it holds:
     \begin{align*}
      \text{cov}(Z_i(\textbf{x} + \textbf{h}), Z_j(\textbf{x}) ) = C_{ij} ( \textbf{h}).
     \end{align*}
	 $\C$ is stationary and isotropic if additionally $\C ( \textbf{h}_1) = \C( \textbf{h}_2)$ whenever $\|\textbf{h}_1 \| = \| \textbf{h}_2 \|$, i.e.\ the 
     marginal  and cross-covariance functions depend only on the distance between the locations. Hereinafter we write $\C(r)$ instead of $\C(\hh)$ with ~$r = \rr$, whenever $\C$ is stationary and isotropic.
     
     We recall that a covariance function must be positive definite, i.e.\ it guarantees that the variance of an arbitrary linear combination of observations of any involved components $Z_{i},$ $i = 1, \dots, m$, taken at arbitrary spatial locations is nonnegative. That is, for any $p \in \N,$ $\textbf{a}_1, \dots, \textbf{a}_n \in \R^m,$ and $\textbf{x}_1, \dots, \textbf{x}_p \in \R^n$ it must hold:
     \begin{equation*}
     \sum_{i=1}^{p} \textbf{a}_i^T \C(\textbf{x}_i - \textbf{x}_j ) \textbf{a}_j \ge 0.
     \end{equation*}    
     
     A comprehensive overview of covariance functions for multivariate geostatistics is found in \cite{genton2015cross} and \cite{jss}. Among these models is the linear model of coregionalization \citep{goulard1992lmc,wackernagel2003}. Although it is widely used by practitioners, it lacks flexibility; its limitations are discussed in \cite{gneiting2012matern}. 
     Models with compact support are introduced in \cite{du2013vector}, \cite{porcu2013radial} and \cite{daley2015}, see also \cite{randomFields}. \cite{kleiber2017coherence} studies the properties of multivariate random fields in the frequency domain. \cite{cressie2016multivariate} develop a conditional approach for constructing multivariate models. In this paper we restrict our attention to stationary and isotropic bivariate models, whose components stem from the same family, i.e.\ to models of the form
	
    \begin{equation}\label{eq:class}
		\C(r) =  \begin{bmatrix}
		\sigma_{1}^2 \psi_{11}(r)  & 
		\rho \sigma_{1}\sigma_{2} \psi_{12}(r ) \\
		\rho \sigma_{1}\sigma_{2} \psi_{12}(r )  &  
		\sigma_{2}^2  \psi_{22}(r )
		\end{bmatrix},
	\end{equation}	
	where  $\sigma_{i}^2> 0$ is the variance of the field $Z_i$, $ \psi_{ij}( \cdot ) =  \psi(\cdot | \bm{\theta_{ij}} , \s_{ij}) $ is a continuous univariate stationary and isotropic correlation function, which depends on a scale  parameter $\s_{ij} > 0,$ $i,j = 1,2,$ and another optional parameter $\bm{\theta_{ij}} = (\theta^{1}_{ij}, ..., \theta^{k}_{ij})$ with $k \in \N$ (e.g.\ smoothness, long range behaviour).  Necessarily, $|\rho| \le 1$. Note that isotropy implies $\psi_{12}(r)  =\psi_{21}(r)$. 
	For instance, the multivariate Mat\'{e}rn model \citep{gneiting2012matern, apanasovich2012valid} is a member of this class with 
	\begin{equation*}
		\psi\left( r | \nu, \s \right) = \frac{2^{1 - \nu}}{\Gamma(\nu)} (\s r)^{\nu} K_{\nu} (\s r), 
	\end{equation*}
	where $\s >0$ is a scale parameter, $\nu > 0$ is a smoothness parameter and $K_{\nu}$ is a modified Bessel function of the second kind. 

	The class given by \eqref{eq:class} also can  be seen as a generalization of the class of separable models introduced by \cite{mardia1993spatial}, where a multivariate covariance function factorizes into a product of a covariance matrix $\textbf{R}$ and a univariate correlation function $\psi(\cdot) $, i.e.\
	 \begin{equation*}
		 C_{ij}(r) = R_{ij} \psi(r) , \, r \ge 0, \, i, j = 1, \dots, m.
	 \end{equation*}
	That is, a separable model assumes that all components share the same spatial correlation structure  and differ only in their variances. In particular, the scale parameter is the same for both marginal and cross-covariance functions. The class \eqref{eq:class} is more flexible allowing each field to have distinct smoothness, scale, and variance parameters and admitting flexible cross-correlation between the fields. Given a univariate correlation function $\psi$, our goal is to find the parameter sets for which the function $\C$ in \eqref{eq:class} is a covariance function. Clearly, if the components are uncorrelated, i.e.\ $\rho = 0$, then $\C$ is always a bivariate covariance function. Thus, we are interested in $|\rho| > 0$. Furthermore, if $\psi_{11} = \psi_{22} = \psi_{12}$ then $|\rho| \le 1$ is also sufficient. 

	It is worth pointing out that not all univariate models can be generalized to non-trivial multivariate models in a  direct way. For example, the univariate spherical model, $\psi( r | \s ) = \left(1 - \frac{3}{2} \s  r + \frac{1}{2} (\s r)^3 \right)_+,$ $\s  > 0,$ is widely used in geostatistics, but its bivariate generalization
	\begin{equation}\label{eq:sph}
		\begin{bmatrix}
		\sigma_{1}^2  \left(1 - \frac{3}{2} \s_{11} r + \frac{1}{2} (\s_{11}r)^3 \right)_+ & 
		\rho \sigma_{1} \sigma_{2} \left(1 - \frac{3}{2} \s_{12}r + \frac{1}{2} (\s_{12} r)^3 \right)_+  \\
		\rho \sigma_{1} \sigma_{2}  \left(1 - \frac{3}{2} \s_{12} r + \frac{1}{2} (\s_{12}r)^3 \right)_+  
		& \sigma_{2}^2  \left(1 - \frac{3}{2} \s_{22} r + \frac{1}{2} (\s_{22}r)^3 \right)_+ 
		\end{bmatrix},
	\end{equation}
	with $\s_{ij} > 0,$ $|\rho| \le 1,$ $i,j = 1, 2,$ is a valid covariance model in $\R^3$ if and only if $\s_{11} = \s_{12} = \s_{22}$ or $\rho = 0$. This follows from the multivariate version of Schoenberg's theorem \citep{schoenberg1938metric, yaglom1987} and the fact that the spectral density of the spherical covariance is a pseudo periodic function with an infinite number of zeros, see Appendix \ref{app:sph} for details. 
	Of course, any convolutional approach for the cross-covariance function including both marginal covariance functions as factors is a promising candidate for a non-trivial model. Examples are given by \cite{du2013vector}, where the cross-covariance function stays constant for $r$ below a certain threshold, and the delay effect in \cite{wackernagel2003}.

	\cite{genton2015cross} pose the question, how to characterize a parameter set of the valid multivariate powered exponential (or stable) model. In Section~\ref{sec:bivariate} we give a partial answer, providing sufficient conditions for the positive definiteness of the bivariate model based on P\'olya type conditions. In a similar way we can also formulate sufficient conditions for the positive definiteness of the bivariate generalized Cauchy model. The models are flexible, intuitive and easily interpretable: in both models three parameters characterize the smoothness of the covariance functions of process components and the cross-covariance functions.  Further three parameters model the long-range behaviour in the bivariate generalized Cauchy model. The smoothness parameters of the marginal covariance functions in both models are restricted to values in $(0, 1]$, similarly to the  application of P\'olya criterion in the  corresponding univariate models.
	
In Section \ref{sec:data}, we fit a bivariate powered exponential model to the Jura dataset \citep{goovaerts1997geostatistics, pebesma2004} and compare the results with the bivariate Mat\'{e}rn model and with the linear model of coregionalization.

{\centering\section{ Flexible bivariate models of P\'olya type}\label{sec:bivariate}}

	We introduce novel bivariate covariance models of the form \eqref{eq:class} and provide sufficient conditions for their validity.  The derivation of new model classes are based on the following general result which includes a weak form of  P\'olya criterion in the univariate case as $\psi_{11}''(r) \ge 0$ implies convexity of $\psi_{11}$.

		\begin{theorem}\label{thm:scale}
		A matrix-valued function $\C$ defined by  equation \eqref{eq:class} is positive definite 
		\begin{itemize}
			\item[a)] in $\R$ if $\psi_{ij}(r),$ $i, j = 1,2,$ is continuously differentiable in $(0, \infty)$ with piecewise existing second derivative in $(0, \infty)$ and the following conditions holds
			\begin{enumerate}[label=(\roman*)]
				\item $r \psi_{ij}'(r) \to 0$ as $r \to \infty$ and $r \psi_{ij}'(r) \to 0$ as $r \to 0$,  \label{item:firstderiv}
				\item $\psi_{ij}' (r)$ is integrable in $(0, \infty)$, $i, j = 1, 2,$	 \label{item:firstderiv2}
				\item the matrix 
				\begin{equation}\label{eq:scale:matrix1}
				\begin{bmatrix}
				\psi''_{11}(r)  & 
				\rho \psi''_{12}(r ) \\
				\rho \psi''_{12}(r )  &  
				\psi''_{22}(r)
				\end{bmatrix}
				\end{equation}	
				is positive semidefinite for almost all $r \ge 0$. \label{item:pd}
			\end{enumerate}
			\item[b)] in $\R^3$ if $\psi_{ij}(r)$, $i, j = 1, 2$ is twice continuously differentiable in $(0, \infty)$ with piecewise existing third derivative in $(0, \infty)$ and the following conditions holds
			\begin{enumerate}[label=(\roman*)]
				\item  $r \psi_{ij}'(r) \to 0$, $r^2 \psi_{ij}''(r) \to 0$ as $r \to \infty$ and $r \psi_{ij}'(r) \to 0$, $r^2 \psi_{ij}''(r) \to 0$  as $r \to 0$, 	 	\label{item:firstderiv3}
				\item$\psi_{ij}' (r)$, $r\psi_{ij}''(r)$ are integrable in $(0, \infty)$, $i, j = 1, 2,$	 \label{item:firstderiv4}
				\item the matrix 
				\begin{equation}
				\begin{bmatrix}\label{eq:scale:matrix2}
				\psi''_{11}(r) - r \psi'''_{11}(r)   & 
				\rho (\psi''_{12}(r) - r \psi'''_{12}(r) ) \\
				\rho (\psi''_{12}(r) - r \psi'''_{12}(r) )  &  
				\psi''_{11}(r) - r \psi'''_{11}(r)
				\end{bmatrix}
				\end{equation}	
				is positive semidefinite for almost all $r \ge 0$. \label{item:pd2}				
			\end{enumerate}
		\end{itemize}
	\end{theorem}

	 Theorem \ref{thm:scale} as well as Theorems \ref{th:exp} and \ref{th:cauchy} below are  proven in the Appendix \ref{app:suff}.
	
	\begin{remark}
		Two following conditions are sufficient for condition \ref{item:pd} in Theorem \ref{thm:scale}, part a):
		\begin{equation}\label{eq:pospolya}
		\psi''_{ii} (r)  \ge 0,  \qquad i = 1,2, \, r \in A,
		\end{equation}
		and 
		\begin{equation}\label{eq:inf}
		\rho^2 \le \inf_{r \in A} \frac{\psi''_{11}(r) \psi''_{22}(r)}{\psi''_{12}(r) ^2},
		\end{equation}
		where $A = \{r \ge 0: \psi''_{ij} (r), i, j = 1,2 , \text{ exist}  \}$. 
		
		Two following conditions are sufficient for condition \ref{item:pd} in Theorem \ref{thm:scale}, part b):
		\begin{equation}\label{eq:pospolya2}
		\psi''_{ii} (r) - r \psi'''_{ii} (r) \ge 0,  \qquad i = 1,2,  \, r \in B,
		\end{equation}
		and 
		\begin{equation}\label{eq:inf2}
		\rho^2 \le \inf_{r \in B} \frac{(\psi''_{11}(r) - r \psi'''_{11}(r))(\psi''_{22}(r) - r \psi'''_{22}(r)) }
		{(\psi''_{12}(r) - r \psi'''_{12}(r) )^2},
		\end{equation}
		where $B = \{r \ge 0: \psi'''_{ij} (r),i, j = 1,2 , \text{ exist}  \}$. 
		
	The infimum in inequalities \eqref{eq:inf} and \eqref{eq:inf2}  is taken over all $r > 0$ with $\psi_{12}''(r) \neq 0$ and $\psi''_{12}(r) - r \psi'''_{12}(r) \neq 0$ respectively.  	Both  $\psi''_{ij}(r) \ge 0$ and $\psi_{ij}''(r) \ge  r \psi_{ij}'''(r)$ hold true for  completely monotone 	$\psi_{ij}(r),$  $i, j = 1, 2.$
	\end{remark}
	
	\subsection{Bivariate powered exponential model}\label{sec:bistable}
	The univariate powered exponential correlation function
	\begin{equation*}
	\psi( r|  \alpha, s ) = \exp(-(s r)^{\alpha}),
	\end{equation*}
	$s >0,$ $\alpha \in (0, 2]$,  contains the exponential model ($\alpha=1$)  and the Gaussian model $(\alpha = 2)$. It permits the full range of allowable values for the fractal dimension \citep{gneiting2002compactly}. Unlike the Mat\'{e}rn model, the univariate powered exponential correlation function does not allow for a smooth parametrization of the  differentiability of the field paths. Indeed, the paths are continuous and non-differentiable for $\alpha <2$ and  infinitely often differentiable for $\alpha = 2$.  Nevertheless, the powered exponential covariance 
	may be a good alternative for non-differentiable fields due to its simplicity. The univariate powered exponential covariance is used in 
	\cite{guillot2009computer}, \cite{henderson2002modeling}, and \cite{kent1997estimating}, for example.
	
	According to \eqref{eq:class}, the marginal covariance functions of the bivariate powered exponential model,
	\begin{equation}\label{eq:bistable1}
		\begin{aligned}
		C_{11}(r) = \sigma_1^2 \exp(-(\s_{11} r)^{\alpha_{11}}), \\
		C_{22}(r) = \sigma_2^2 \exp(-(\s_{22} r)^{\alpha_{22}}),
		\end{aligned}
	\end{equation}
    are	of powered exponential type with variance parameter $\sigma_{i}$, smoothness parameter $\alpha_{ii} \in (0, 2]$ and scale parameter $\s_{ii} > 0$, $i = 1, 2;$ the cross-covariance functions,
	\begin{align}\label{eq:bistable2}
	C_{12}(r) = C_{21}(r)= \rho  \sigma_1 \sigma_2 \exp(-(\s_{12} r)^{\alpha_{12}}), 
	\end{align}
	are also a powered exponential function with colocated correlation $\rho$, $|\rho | \le 1,$ smoothness parameter $\alpha_{12} \in (0, 2]$ and scale parameter $\s_{12} > 0$. 
	
	Whilst Theorem \ref{th:exp} below will give a sufficient condition for the positive definiteness of  the powered exponential model with $\alpha_{ii} \in (0, 1],$ $i = 1, 2,$ the following two corollaries of Schoenberg's theorem provide a necessary and sufficient condition for the special cases $\alpha_{ij} = 1$ and $\alpha_{ij} =2$, respectively for $i, j = 1, 2.$ 
	The bivariate exponential model is a special case of the bivariate  Mat\'{e}rn model, the calculations for $\rho^2$ boundaries follow directly from Theorem 3 in \cite{gneiting2012matern}. 
	\begin{corollary}\label{cor:biexp}
	The  bivariate exponential model defined by \eqref{eq:bistable1} and \eqref{eq:bistable2} with $\alpha_{11} = \alpha_{12} = \alpha_{22} = 1$ is a covariance function in $\R^n,$ $n \in \N,$ if and only if
	\begin{equation}\label{eq:bivexp}
		\rho^2 \le \frac{s_{11} s_{22}}{s_{12}^2} \inf_{r>0} \frac{(s_{12}^2 + r^2 )^{1+n} }
		{ (s_{11}^2 + r^2)^{1/2 + n/2}(s_{22}^2 + r^2)^{1/2 + n/2}}.
	\end{equation}
	In particular, this can be written as one of the following cases:
	\begin{enumerate}
		\item if $s_{12} \le \min \{s_{11}, s_{22} \} $  the bivariate exponential model is valid if and only if 
		\begin{equation*}
			\rho^2 \le  \left( \frac{s_{12}^2}{s_{11} s_{22}} \right)^n 
		\end{equation*}
		\item if $\min \{s_{11}, s_{22} \} \le s_{12} \le \max \{s_{11}, s_{22} \} $  the infimum in \eqref{eq:bivexp}
		is attained either if $r = 0$, or in the limit as $r \to \infty,$ or if 
		\begin{equation*}
			r^2 = \frac{s_{11}^2 s_{12}^2 + s_{12}^2 s_{22}^2 - 2s_{11}^2s_{22}^2 }{ s_{11}^2 + s_{22}^2 - 2 s_{12}^2 }.
		\end{equation*}
		\item if $s_{12} \ge \max \{s_{11}, s_{22} \} $  the bivariate exponential model is valid if and only if 
		\begin{equation*}
			\rho^2 \le \frac{s_{11} s_{22}}{s_{12}^2}.
		\end{equation*}
	\end{enumerate}
\end{corollary}

\begin{corollary}\label{cor:bigauss}
	The bivariate Gaussian model defined by \eqref{eq:bistable1} and \eqref{eq:bistable2} with $\alpha_{11} = \alpha_{12} = \alpha_{22} = 2$ is a covariance function in $\R^n$ if and only if one of the following conditions holds
	\begin{enumerate}[label=(\roman*)]
		\item $\s_{12}^2 \le 2 \s_{11}^2 \s_{22}^2/(s_{11}^2 + s_{22}^2) $ and $\rho^2  \le  (\s_{12}^{2}/ (\s_{11}\s_{22}))^n$
		\item  $\rho = 0$. 
	\end{enumerate}
\end{corollary}	

Now we consider equations \eqref{eq:bistable1} and \eqref{eq:bistable2}  with  $\alpha_{ii} \in (0, 1],$ $i = 1,2,$  and we define auxiliary  functions $q_{\alpha, \s}^{(n)}(r)$, $n \in \{1, 3\},$ by
	 \begin{align*}
		 q_{\alpha, \s}^{(1)}(r) & =   \alpha (\s r)^{\alpha}  - \alpha + 1, \\
		 q_{\alpha, \s}^{(3)}(r) & =  \alpha^2 (\s r)^{2\alpha} + \alpha (4 - 3 \alpha)  (\s r)^{\alpha}  + \alpha^2 - 4 \alpha + 3 .
	 \end{align*}

	\begin{theorem}\label{th:exp}
		A matrix-valued function $\C$ given by equations~\eqref{eq:bistable1} and \eqref{eq:bistable2} with $\alpha_{ii} \in (0, 1],$ $i = 1, 2,$ and $\alpha_{12} \in (0, 2]$ is a covariance model in $\R^n$, $n \in \{1, 3 \}$, if 
	\begin{equation}\label{eq:exprho}
	\rho^2 \le  \frac{\alpha_{11} \alpha_{22} \s_{11}^{\alpha_{11}} \s_{22}^{\alpha_{22}}}{\alpha_{12}^2 \s_{12}^{2\alpha_{12}} }
	\inf_{r > 0}
	r^{ \alpha_{11} + \alpha_{22} - 2\alpha_{12}} \exp\left(2 (\s_{12}r)^{\alpha_{12}} -(\s_{11}r)^{\alpha_{11}} - (\s_{22}r)^{\alpha_{22}} \right)  \frac{q_{\alpha_{11}, \s_{11}}^{(n)}(r)  q_{\alpha_{22}, \s_{22}}^{(n)}(r)  }{ (q_{\alpha_{12}, \s_{12}}^{(n)}(r) )^2}, 
	\end{equation}
	In particular,  the infimum  in \eqref{eq:exprho} is positive if and only if one of the following conditions is satisfied
	\begin{enumerate}[label=(\roman*)]
	\item  $\alpha_{12} = \alpha_{11} = \alpha_{22}$ and  $\s_{12}^{\alpha_{11}} \ge (\s_{11}^{\alpha_{11}} + \s_{22}^{\alpha_{11}})/2$,
	\item  $\alpha_{12} = \alpha_{11} > \alpha_{22}$ and $\s_{12} > 2^{-1/\alpha_{11}} \s_{11}$,
	\item  $\alpha_{12} = \alpha_{22} > \alpha_{11}$ and   $\s_{12} > 2^{-1/\alpha_{22}} \s_{22}$,
	\item $\alpha_{12} > \max\{\alpha_{11}, \alpha_{22}\}$.
	\end{enumerate}
	Moreover,
	if  $\alpha_{12} <  (\alpha_{11} + \alpha_{22})/2$  the model is valid only for $\rho = 0$.
	\end{theorem}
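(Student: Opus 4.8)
The overall plan is to convert the matrix condition into a single scalar inequality via the general sufficient condition described in the abstract and proved in Appendix~\ref{app:suff}, and then to treat the positivity of the infimum and the final necessity clause as separate analytic questions. The sufficient condition rests on the fact that in dimensions $n\in\{1,3\}$ the isotropic spectral density of a correlation function $\psi$ can be written as $f^{(n)}(\omega)=\int_0^\infty A^{(n)}_\psi(r)\,k^{(n)}_\omega(r)\,dr$ against a nonnegative kernel $k^{(n)}_\omega\ge0$, where the amplitude $A^{(n)}_\psi$ is built from the derivative of $\psi$ of order $2$ for $n=1$ and of order $3$ for $n=3$. For $n=1$ this is the elementary identity $\psi(r)=\int_0^\infty(t-r)_+\psi''(t)\,dt$, which gives $f^{(1)}(\omega)=(\pi\omega^2)^{-1}\int_0^\infty\psi''(t)(1-\cos\omega t)\,dt$ with $k^{(1)}_\omega(t)=(1-\cos\omega t)/(\pi\omega^2)\ge0$, and $n=3$ is the analogous third-order statement. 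Positive definiteness of $C$ is then equivalent to the spectral matrix $[f^{(n)}_{ij}]$ being nonnegative definite for every $\omega$, i.e.\ $f^{(n)}_{11},f^{(n)}_{22}\ge0$ and $f^{(n)}_{11}f^{(n)}_{22}\ge(f^{(n)}_{12})^2$; and if the marginal amplitudes are nonnegative while the pointwise bound $\rho^2A_{12}(r)^2\le A_{11}(r)A_{22}(r)$ holds for all $r>0$, a Cauchy--Schwarz estimate under the common nonnegative kernel delivers exactly these inequalities. This is the sufficient condition I would invoke.

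Next I would specialise to \eqref{eq:bistable1}--\eqref{eq:bistable2}. Differentiating $\psi(r)=\exp(-(\s r)^\alpha)$ gives $\psi''(r)=\alpha\s^{\alpha}r^{\alpha-2}e^{-(\s r)^\alpha}q^{(1)}_{\alpha,\s}(r)$, and the third-order amplitude for $n=3$ produces the companion factor $q^{(3)}_{\alpha,\s}(r)$; thus each amplitude equals $q^{(n)}_{\alpha_{ij},\s_{ij}}(r)$ up to a strictly positive prefactor. The restriction $\alpha_{ii}\in(0,1]$ is exactly what makes the marginal amplitudes nonnegative: viewed as polynomials in $(\s r)^\alpha$, both $q^{(1)}_{\alpha,\s}$ and $q^{(3)}_{\alpha,\s}$ have all coefficients nonnegative when $\alpha\le1$, so $q^{(n)}_{\alpha,\s}\ge0$, whereas for $\alpha>1$ the amplitude changes sign and the pointwise argument fails. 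Substituting the three amplitudes into $\rho^2A_{12}^2\le A_{11}A_{22}$, cancelling the common powers of $r$ and the exponentials, and taking the infimum over $r>0$ turns the pointwise bound into precisely \eqref{eq:exprho}, establishing the sufficiency claim for both $n=1$ and $n=3$.

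For the characterisation of when the infimum in \eqref{eq:exprho} is strictly positive, write $g(r)$ for the function under the infimum. On $(0,\infty)$ it is strictly positive and continuous except at zeros of $q^{(n)}_{12}$, where it diverges to $+\infty$, so the infimum can vanish only through the two endpoint limits. As $r\to\infty$ the exponential $\exp\{2(\s_{12}r)^{\alpha_{12}}-(\s_{11}r)^{\alpha_{11}}-(\s_{22}r)^{\alpha_{22}}\}$ dominates, and comparing the three competing powers of $r$ according to whether $\alpha_{12}$ exceeds, equals, or falls below $\max\{\alpha_{11},\alpha_{22}\}$ shows that $g$ stays bounded away from $0$ at infinity exactly in the four regimes (i)--(iv), together with their sharp $\s_{12}$-thresholds (equality being admissible only in the balanced case (i), where the surviving $q$-factors give a positive limit). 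As $r\to0^+$ the scale is set by $r^{\alpha_{11}+\alpha_{22}-2\alpha_{12}}$, and since each of (i)--(iv) forces $\alpha_{12}\ge\max\{\alpha_{11},\alpha_{22}\}$ and hence $\alpha_{11}+\alpha_{22}-2\alpha_{12}\le0$, this factor does not push $g$ to $0$; the one point demanding care is the origin value of the $q^{(n)}$-factors, which vanish precisely at the critical smoothness $\alpha_{ii}=1$, so the boundary values $\alpha_{ii}=1$ I would treat by an explicit local expansion of $g$. Combining the two endpoints gives the stated equivalence.

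Finally, the ``moreover'' clause is a necessity statement and cannot follow from the sufficient condition, so here I would argue directly with spectral densities. For $\alpha\in(0,2)$ the isotropic spectral density of $\exp(-(\s r)^\alpha)$ has the polynomial tail $f^{(n)}(\omega)\sim c_{n,\alpha}\,\omega^{-n-\alpha}$ as $\omega\to\infty$ with a strictly positive constant $c_{n,\alpha}$, a Tauberian consequence of the $r^{\alpha}$ behaviour of $1-\psi$ near the origin; note that $\alpha_{12}<(\alpha_{11}+\alpha_{22})/2\le1$ keeps the cross-exponent away from the Gaussian value $2$. Positive definiteness forces the necessary inequality $f^{(n)}_{11}(\omega)f^{(n)}_{22}(\omega)\ge(f^{(n)}_{12}(\omega))^2$ for all $\omega$; inserting the tails and dividing by $\omega^{-2n-2\alpha_{12}}$ yields an inequality of the form $c_{11}c_{22}\,\omega^{2\alpha_{12}-\alpha_{11}-\alpha_{22}}\ge\rho^2c_{12}^2$, whose left side tends to $0$ when $2\alpha_{12}<\alpha_{11}+\alpha_{22}$, forcing $\rho=0$ as $\omega\to\infty$. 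I expect the main obstacles to lie not in the bookkeeping of (i)--(iv) but in two places: establishing the nonnegative-kernel third-derivative representation that underlies the sufficient condition in $\R^3$, and pinning down the positive tail constants $c_{n,\alpha}$ precisely enough to make the necessity argument rigorous.
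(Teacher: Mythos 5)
Your proposal is correct in substance, and its sufficiency half is the Fourier dual of the paper's argument rather than a genuinely different one. The paper stays entirely in the space domain: via the mixture principle of Theorem~\ref{th:porcu} it writes $\psi_{ij}(r)=\int_0^\infty k(r,u)\,g_{ij}(u)\,du$ with Euclid's hat $k(r,u)=(1-r/u)_+$ in $\R$ and $k(r,u)=(1-r/u)_+-\frac{r}{2u}\bigl(1-\frac{r^2}{u^2}\bigr)_+$ in $\R^3$, where $g_{ij}(u)$ is proportional to $u\psi_{ij}''(u)$, respectively $u\psi_{ij}''(u)-u^2\psi_{ij}'''(u)$, and requires the $2\times 2$ matrix $[g_{ij}(u)]$ to be positive semidefinite for almost every $u$ (Theorem~\ref{thm:scale}). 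Your frequency-domain version --- the spectral density as an integral of the same second/third-derivative amplitude against a nonnegative kernel, then Cauchy--Schwarz plus Cram\'er's criterion --- is exactly this statement after Fourier transform, since nonnegativity of your kernels is precisely the positive definiteness of Euclid's hat. This resolves the first ``obstacle'' you anticipate: the third-order representation in $\R^3$ is the known positive definiteness of $h_3$ (\cite{gneiting1999radial}), which the paper simply quotes, so nothing new needs to be established. The second obstacle is likewise covered by citation: the positive tail constants in your necessity argument are delivered by the Abelian/Tauberian theorem recorded in Section~\ref{sec:necessary} together with the series representation of the stable spectral density invoked in Example~\ref{ex:exp}, and your ``moreover'' argument coincides with the paper's. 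Your amplitude computations --- the factorization $\psi''(r)=\alpha\s^{\alpha}r^{\alpha-2}e^{-(\s r)^{\alpha}}q^{(1)}_{\alpha,\s}(r)$, its third-order analogue, and the nonnegativity of the coefficients of $q^{(n)}_{\alpha,\s}$ for $\alpha\le 1$ --- check out and reproduce \eqref{eq:exprho} exactly. What the paper's route buys is that sufficiency needs no Fourier analysis at all; what yours buys is that sufficiency and necessity then live in the same spectral picture.

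On the characterisation (i)--(iv), note that the paper's own proof is a single sentence (``the infimum can be zero only at $r=0$ or $r=\infty$; clearly \dots''), so your endpoint analysis is already more detailed than the published one, and the caution you express at the boundary $\alpha_{ii}=1$ is warranted rather than pedantic: there $q^{(1)}_{1,\s}(r)=\s r$ and $q^{(3)}_{1,\s}(0)=0$, so the numerator factors vanish at the origin. Indeed, taking $n=1$, $\alpha_{11}=\alpha_{22}=1$ and $\alpha_{12}\in(1,2)$ --- which falls under (iv) --- the function under the infimum in \eqref{eq:exprho} behaves near the origin like a positive constant times $r^{\,2-2\alpha_{12}}\cdot r^{2}=r^{\,4-2\alpha_{12}}\to 0$, so the infimum vanishes although (iv) holds. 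Your promised ``explicit local expansion'' therefore cannot simply confirm the stated equivalence at this boundary; rather, carried out honestly, it exposes a corner case that the paper's ``clearly'' glosses over, and a complete write-up should either exclude $\alpha_{ii}=1$ from (iv) or add the appropriate extra condition there. Away from $\alpha_{ii}=1$ your two-endpoint analysis, including the observation that zeros of $q^{(n)}_{\alpha_{12},\s_{12}}$ only send the ratio to $+\infty$ and the strictness of the $\s_{12}$-thresholds in (ii)--(iii), matches what the paper's argument implicitly requires.
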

	
	As inequality \eqref{eq:exprho}  provides only a sufficient but not a necessary condition for positive definiteness, zero infimum in inequality \eqref{eq:exprho} does not imply that the model defined by \eqref{eq:bistable1} and \eqref{eq:bistable2} is not a valid covariance model. 

	\if1\blind {The model will be implemented  in R package RandomFields \citep{randomFields}.} \fi
	Figure \ref{fig:expRho} provides an example of the maximum attainable $|\rho|$ in inequality~\eqref{eq:exprho} that has been found numerically. 

	\begin{figure}
	\begin{minipage}[b]{0.45\textwidth}
		\includegraphics[width=\textwidth]{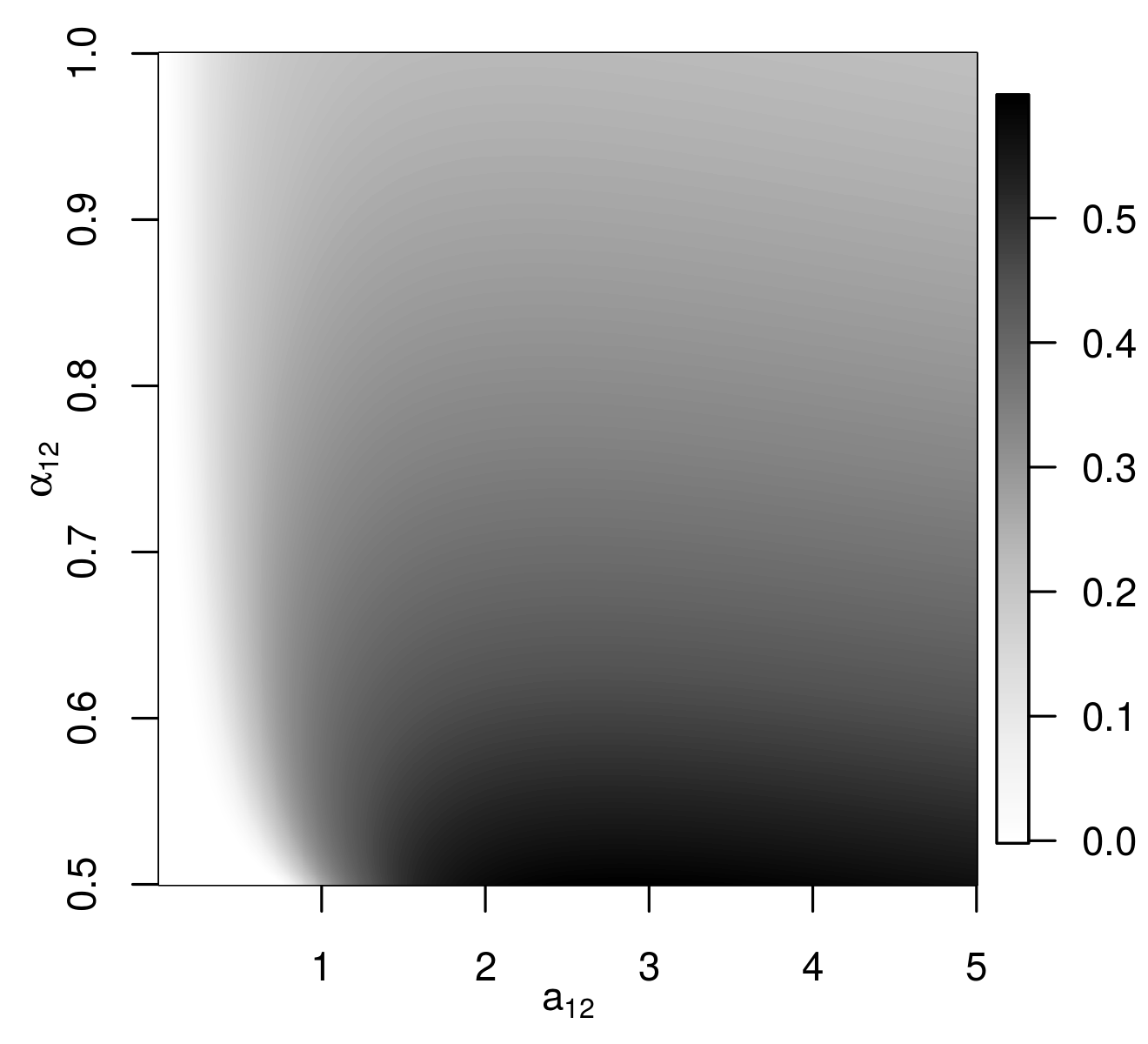}	
		\caption{The maximum attainable $|\rho|$  in inequality \eqref{eq:exprho} for the bivariate powered exponential covariance model in $\R$. The parameters are $\sigma_{1} = \sigma_{2} = 1$, $\alpha_{11} = 0.2,$ $\alpha_{22} = 0.5$, $\s_{11} = 2,$ $\s_{22} = 3$.  }
		\label{fig:expRho}
	\end{minipage}
	\hfill
	\begin{minipage}[b]{0.45\textwidth}
		\includegraphics[width=\textwidth]{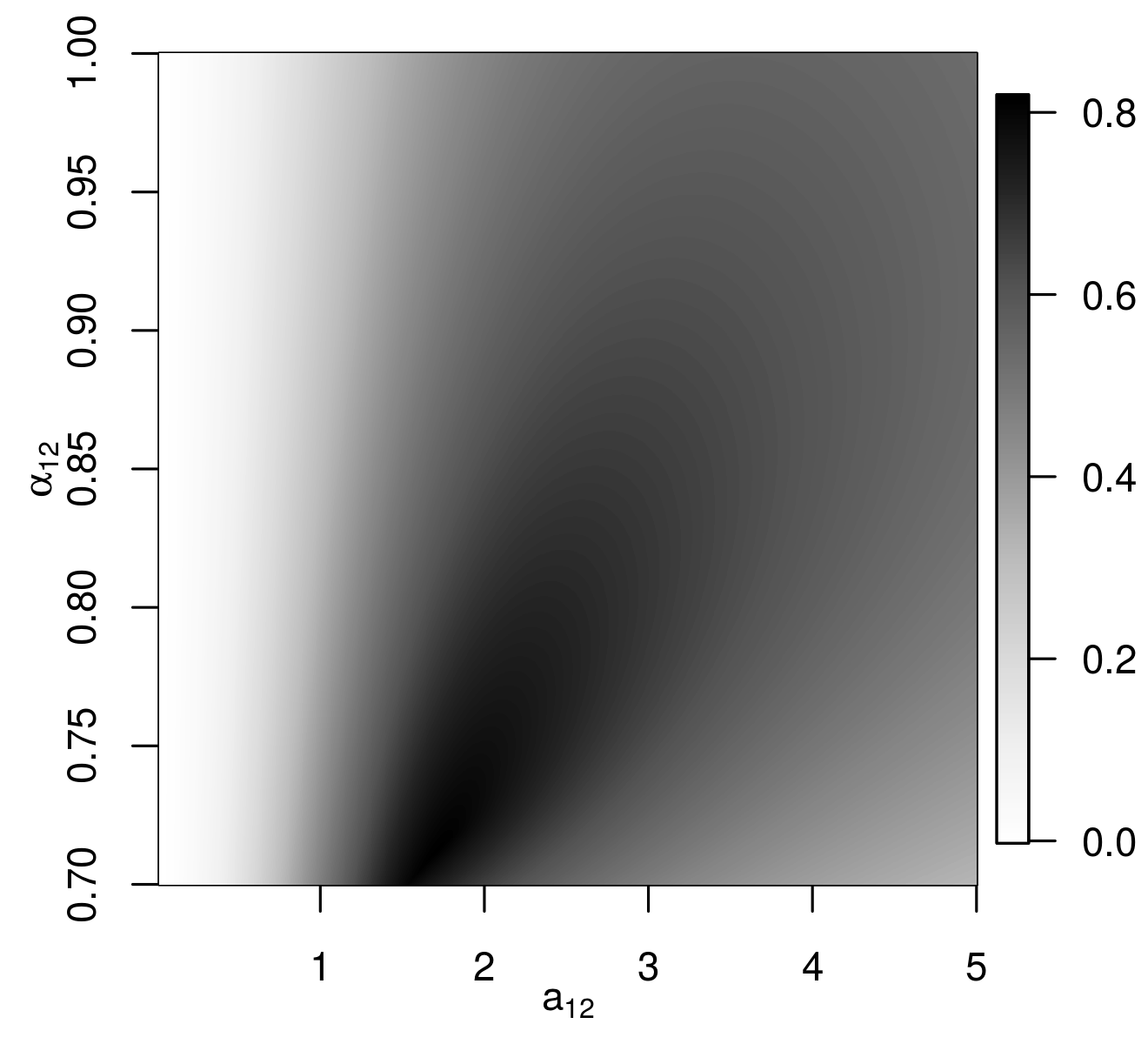}	
		\caption{The maximum attainable $|\rho|$ in inequality \eqref{eq:cauchyrho} for the bivariate Cauchy covariance model in $\R$. The parameters are  $\sigma_{1} = \sigma_{2} = 1$, $\alpha_{11} = 0.5,$ $\alpha_{22} = 0.9$, $\beta_{11} =2,$ $\beta_{12}  = 2.5,$ $\beta_{22}  = 2.1$,  $\s_{11} = 2,$ $\s_{22} = 2.5$. }
		\label{fig:CauchyRho}
	\end{minipage}
	\end{figure}

	\subsection{Bivariate generalized Cauchy model}\label{sec:bicauchy}
	The univariate generalized Cauchy model, $$\psi( r | \alpha, \beta, \s) = (1 + (\s r)^{\alpha} )^{-\beta/\alpha}, $$ has been introduced  in \cite{gneiting.powerlaw} and \cite{cauchy}.  Here $\s >0$ is a scale parameter, $\alpha \in (0, 2]$ is a smoothness parameter and $\beta > 0$ controls the long range behaviour of the field.

	Marginal covariance functions of the bivariate generalized Cauchy model,
	\begin{equation}\label{eq:biacauchy1}
	\begin{aligned}
	C_{11}(r) = \sigma_1^2 (1 + (\s_{11} r)^{\alpha_{11}} )^{-\beta_{11}/\alpha_{11}}, \\
	C_{22}(r) = \sigma_2^2 (1 + (\s_{22} r)^{\alpha_{22}} )^{-\beta_{22}/\alpha_{22}},
	\end{aligned}
	\end{equation}
	 are of generalized Cauchy type with variance parameter $\sigma_{i} >0$,  smoothness parameter $\alpha_{ii} \in (0, 2]$, long range parameter $\beta_{ii} > 0$ and scale parameter $\s_{ii} > 0$, $i = 1, 2.$ Each cross-covariance,
	\begin{align}\label{eq:biacauchy2}
	C_{12}(r) = C_{21}(r) = \rho \sigma_1 \sigma_2 (1 + (\s_{12} r)^{\alpha_{12}} )^{-\beta_{12}/\alpha_{12}},
	\end{align}
	is also  of generalized Cauchy type  with colocated correlation $\rho$, $|\rho| \le 1,$ smoothness parameter $\alpha_{12} \in (0, 2]$, long range parameter $\beta_{12} > 0$ and scale parameter $\s_{12} > 0$.
	
	We define the auxiliary functions $p_{\alpha, \beta, \s}^{(n)}(r)$, $n \in \{1, 3\}$,
	\begin{align*}
	p_{\alpha, \beta, \s}^{(1)}(r) & = \frac{(\beta + 1) (\s r)^{\alpha}  -\alpha + 1  }{(1 + (\s r)^{\alpha} )^{\beta/\alpha + 2}}, \\
	p_{\alpha, \beta, \s}^{(3)}(r) & = \frac{ (\beta + 1)(\beta + 3)(\s r)^{2\alpha} + (4\beta + 6 - 4 \alpha - 3\beta\alpha - \alpha^2) (\s r)^{\alpha} + (\alpha -1 )(\alpha -3) }{(1 + (\s r)^{\alpha} )^{\beta/\alpha + 3}}.
	\end{align*}
	
	\begin{theorem}\label{th:cauchy}
		A matrix-valued function $\C$ given by equations \eqref{eq:biacauchy1} and \eqref{eq:biacauchy2} with $\alpha_{ii} \in (0, 1],$  $\alpha_{12} \in (0, 2]$ and $\beta_{ij} > 0,$ $i, j = 1, 2,$  is a covariance function in $\R^n,$ $n \in \{1, 3\},$ if
		
		\begin{equation}\label{eq:cauchyrho}
		\rho^2 \le
		\frac{\beta_{11} \beta_{22}}{\beta_{12}^2} \frac{\s_{11}^{\alpha_{11}} \s_{22}^{\alpha_{22}}}{ \s_{12}^{2\alpha_{12}}}  \inf_{r > 0} r^{\alpha_{11} + \alpha_{22} - 2\alpha_{12} } \frac{p_{\alpha_{11}, \beta_{11} , \s_{11} }^{(n)}(r) p_{\alpha_{22}, \beta_{22} , \s_{22}}^{(n)}(r)}{(p_{\alpha_{12}, \beta_{12} , \s_{12}}^{(n)}(r))^2}
		\end{equation}
		
			In particular, 
	\begin{enumerate}[label=(\roman*)]
			\item if $\alpha_{12} \ge  (\alpha_{11} + \alpha_{22})/2$ and $\beta_{12} \ge (\beta_{11} + \beta_{22})/2$  the infimum in inequality \eqref{eq:cauchyrho} is positive; \label{item:cauchy6}
			\item if $\alpha_{12} < (\alpha_{11} + \alpha_{22})/2$  the model is valid if and only if $\rho = 0$; \label{item:cauchy1}
			\item if $\beta_{12} <  ( \min \{ \beta_{11}, n  \}  +  \min \{ \beta_{22}, n  \})/2$, the model is valid if and only if $\rho = 0$;  \label{item:cauchy2}
			 \item if $\beta_{12} < (\beta_{11} + \beta_{22})/2$, the infimum in inequality \eqref{eq:cauchyrho}  is zero.  \label{item:cauchy5}
	\end{enumerate} 
	\end{theorem}
	Analogously to the powered exponential model, inequality \eqref{eq:cauchyrho}  is  only a sufficient but not a  necessary condition for positive definiteness. Figure \ref{fig:CauchyRho} provides an example of  the maximum attainable $|\rho|$  in inequality \eqref{eq:cauchyrho} that has been found numerically.

\begin{remark}
	 Note that the inequalities \eqref{eq:pospolya} and \eqref{eq:pospolya2}  must hold only for diagonal covariance functions, but not for the cross-covariance function. 
	This allows $\alpha_{12}$  to take values in $(0, 2]$ in the bivariate powered exponential model and the bivariate generalized Cauchy model. 
\end{remark}

{\centering\section{Data example:  content of copper and zinc in Swiss Jura}\label{sec:data}}
	
	\begin{figure}
		\includegraphics[width=\maxwidth]{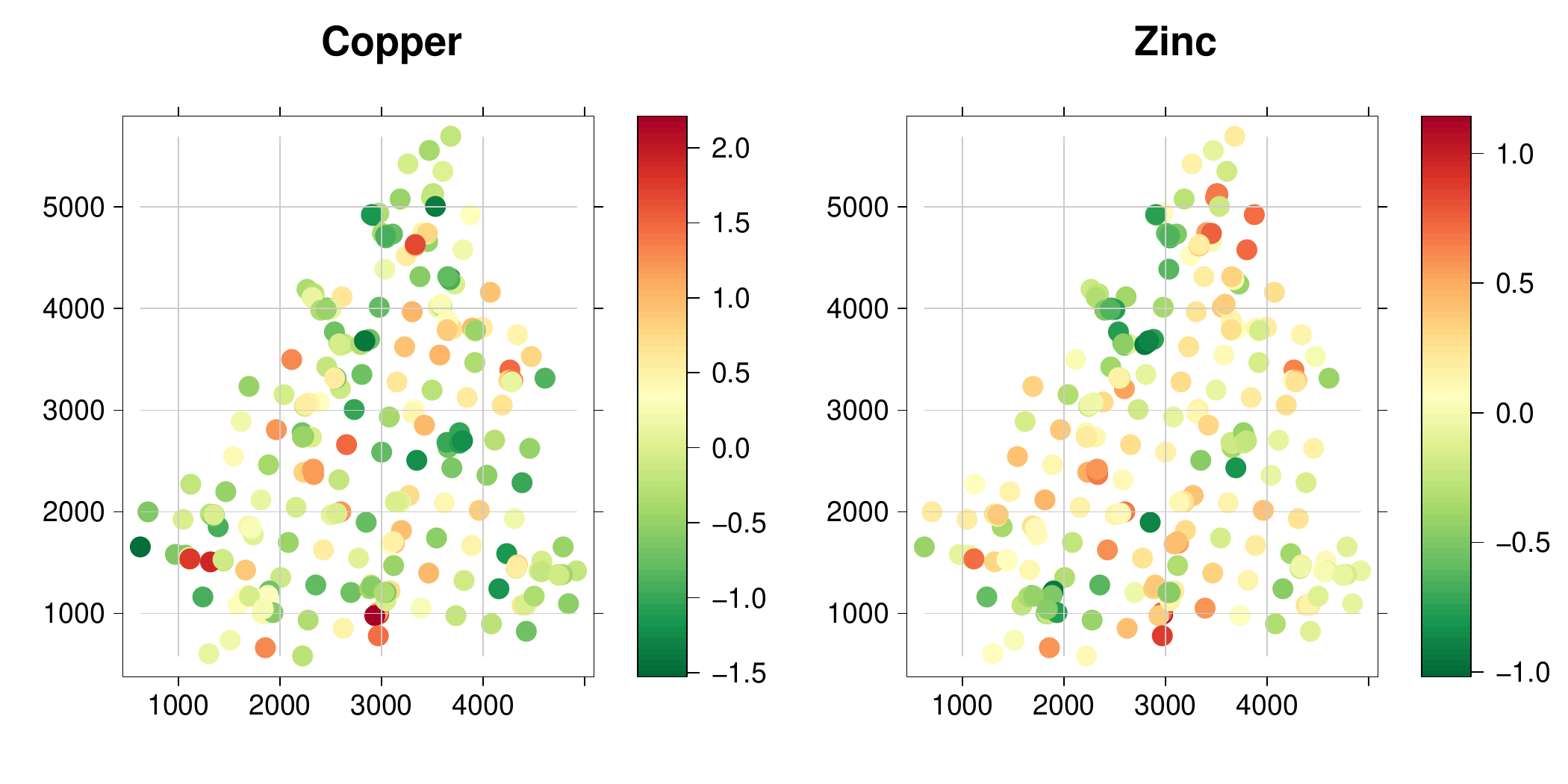} 	
		\caption{Concentration of copper and zinc in the topsoil.}\label{fig:jura}
	\end{figure}
	
	The classical geostatistical  dataset Jura from Pierre Goovaerts' book \citep{goovaerts1997geostatistics}  is provided by the package \pkg{gstat} (\cite{pebesma2004}, \cite{graler2016}). It contains concentrations of seven heavy metals (cadmium, cobalt, chromium, copper, nickel, lead and zinc)  in the topsoil  of the 14.5 km$^2$ region in Swiss Jura.  In this section we analyze the measurements of copper and zinc.  The measurements were sampled on a square grid at 250 m intervals with additional nesting with distances of 100 m, 40 m, 16 m and 6 m \citep{webster1994coregionalization}. The basic grid consists of 207 nodes, out of which 38 nodes were selected for nesting. Starting from each of these 38 nodes, the first location was chosen 100 m away in a random direction. The second location was chosen 40 m away from the first one again in a random direction. In a similar way the third and the forth locations were picked out, see Figure~\ref{fig:jura} for the arrangement of the locations. For more details on the sampling scheme and its statistical impact see \cite{atteia1994geostatistical}, \cite{webster1994coregionalization} and Chapters 2.3.1 and 4.1.1 in \cite{goovaerts1997geostatistics}. The content of zinc and copper is measured in parts per million (ppm), which means that the data are compositional and range from 0 to $10^6$. However, since the concentrations of copper and zinc are low (maximum 166.4 ppm for copper and 259.8 ppm for zinc), we analyze the dataset in a non-compositional way, following \cite{pebesma2017meuse} and \cite{goovaerts1997geostatistics}, rather then employ a compositional approach (\cite{aitchison1982statistical}, \cite{pawlowsky2011compositional}, \cite{pawlowsky2006compositional}). 
	
	The measurements at 359 locations are divided into a training set (259 locations) and a validation set (100 locations). The training set consists of grid points and the nested points, while the validation set contains only grid points. Exemplarily we fit the bivariate powered exponential model, the bivariate Mat\'{e}rn model and the linear model of coregialization (LMC)  to the training set and compare the models performance on the validation set.
	
	Following \cite{webster1994coregionalization} we first take the log-transform of the metals concentration and then subtract the mean values of the logarithms. Figure \ref{fig:jura} shows the transformed concentrations of copper and zinc.  To asses the normality of the data, we examine one and two dimensional distributions.  Shapiro-Wilk test does not reject the hypothesis that  marginal distributions of zinc and copper are univariate normal at significance level 0.05. QQ-plots in Figures \ref{fig:qqplotcopper} and \ref{fig:qqplotzinc} for marginal distributions of copper and zinc also suggest that they are close to normal. The chi-squared QQ-plot in Figure \ref{fig:qqplotQQchisq} does not go against the bivariate normal distribution of the colocated data, neither rejects the Royston's test the bivariate normality at significance level 0.05. Henceforth we assume that the data stem from a bivariate Gaussian process with zero mean.
	\begin{figure}
		\begin{minipage}[b]{0.32\textwidth}
			\includegraphics[width=1\maxwidth]{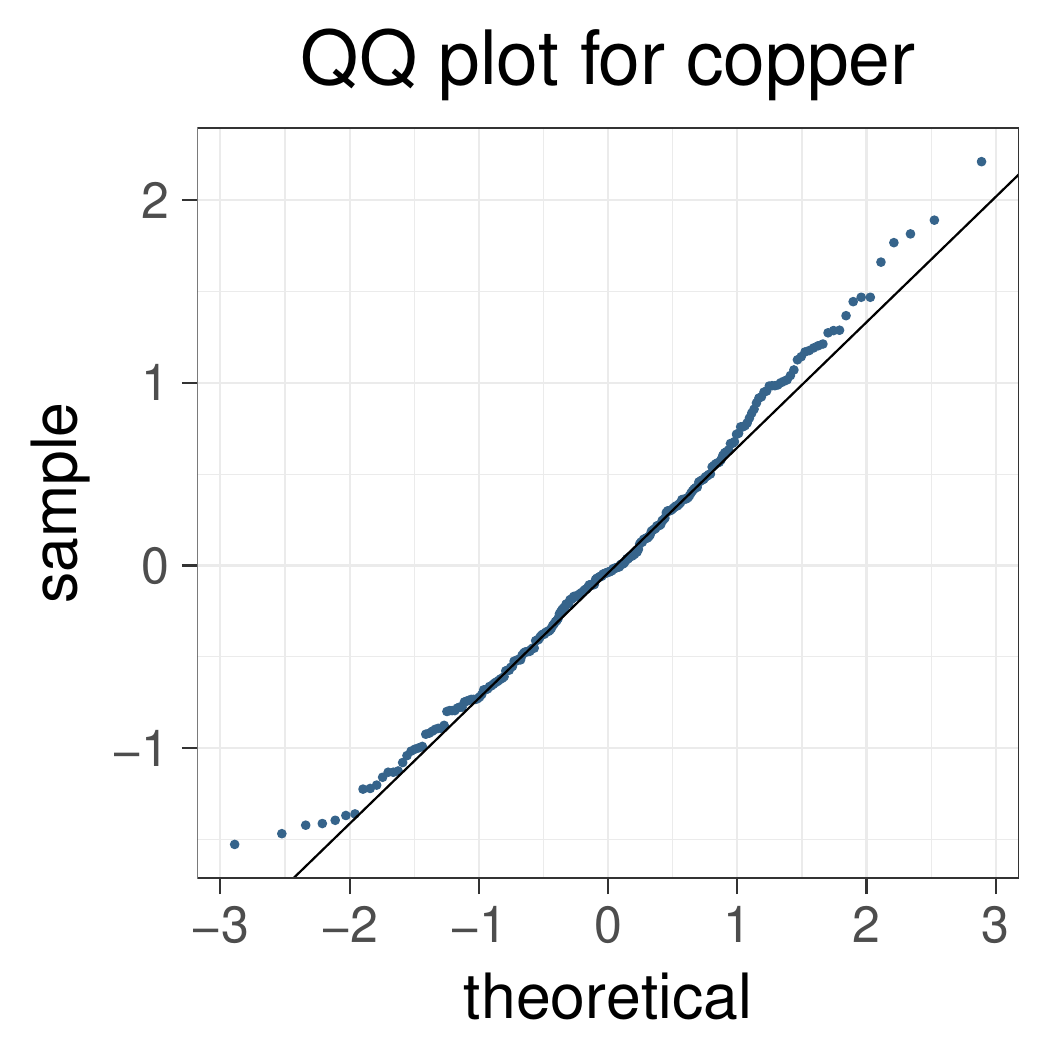} 	
			\caption{QQ plot for  copper concentrations \newline}\label{fig:qqplotcopper}
		\end{minipage}
		\hfill
		\begin{minipage}[b]{0.32\textwidth}
			\includegraphics[width=1\maxwidth]{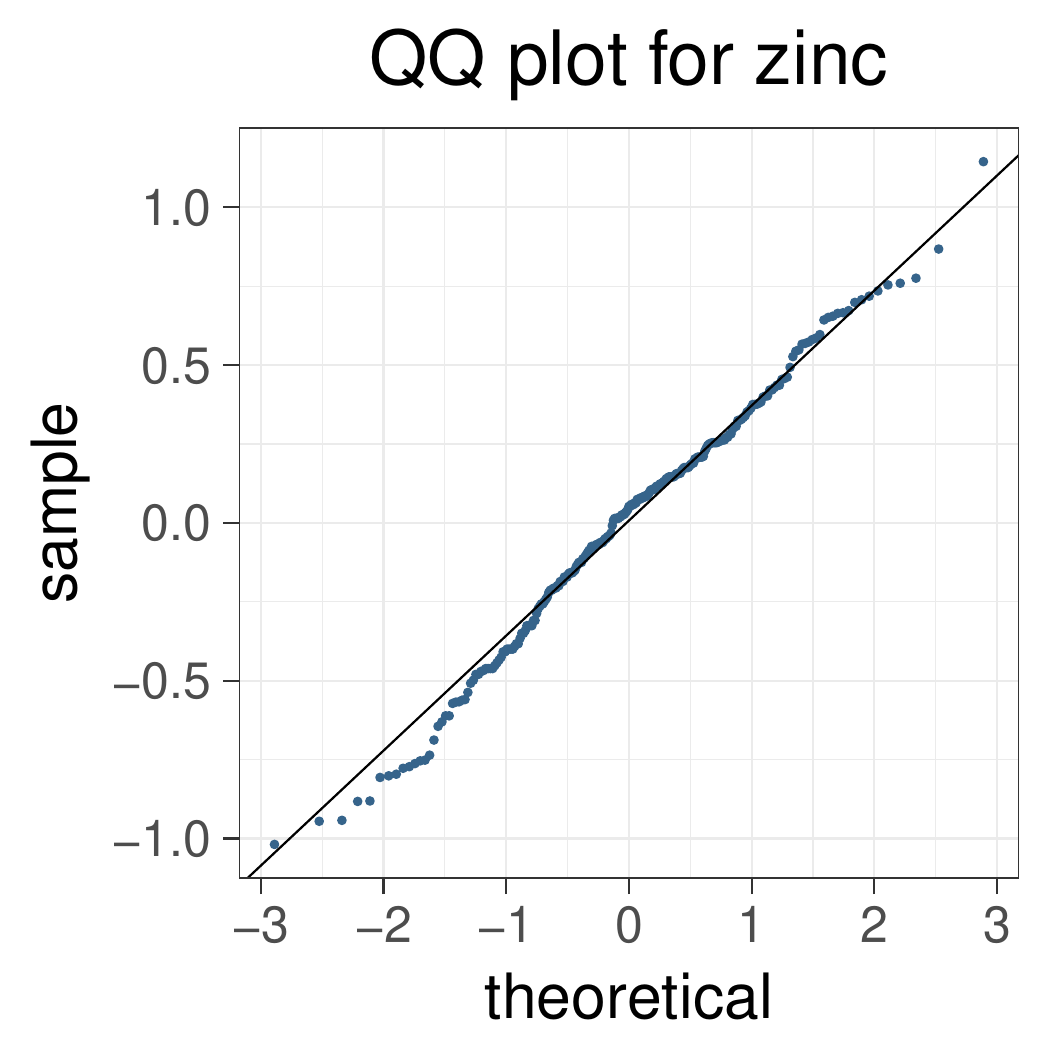} 	
			\caption{QQ plot for  zinc concentrations \newline}\label{fig:qqplotzinc}
		\end{minipage}	
		\hfill	
		\begin{minipage}[b]{0.32\textwidth}
			\includegraphics[width=1\maxwidth]{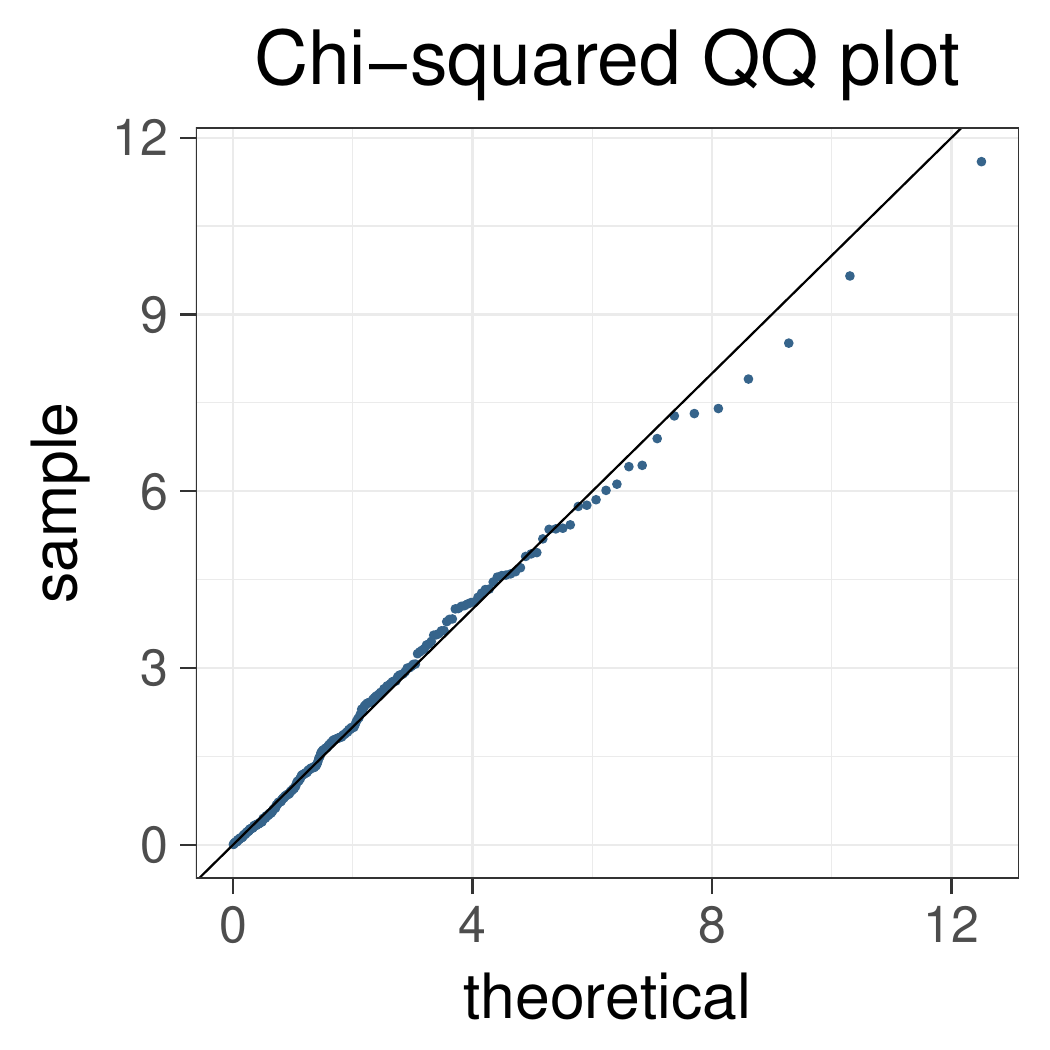} 	
			\caption{Chi-squared QQ-plot  for copper and zinc concentrations}\label{fig:qqplotQQchisq}
		\end{minipage}		
	\end{figure}
	
	The colocated empirical correlation of the data is 0.62, therefore it is reasonable to fit a bivariate covariance model. Covariance functions, which are not differentiable at the origin, are often used in geostatistics, see for example \cite{goovaerts1999geostatistics}, \cite{journel1974geostatistics}, \cite{lark2006spatial}, \cite{oliver2014tutorial}. Before fitting bivariate covariance models to the data, we fit a univariate powered exponential model to copper  and zinc observations separately in order to see if the condition $\alpha_{ii} \in (0, 1],$ $i = 1,2,$  in the bivariate powered exponential covariance model  is restrictive for this dataset. To account for measurement error we add the nugget effects to the univariate powered exponential models 
	\begin{align*}
	C_{C}(r) & = \sigma_{C}^2 \exp\left(-(\s_{C}r)^{\alpha_{C}}\right) + \tau_{C}^2  \mathds{1}(r =0),\\
	C_{Z}(r) & = \sigma_{Z}^2 \exp\left(-(\s_{Z}r)^{\alpha_{Z}}\right) + \tau_{Z}^2  \mathds{1}(r =0),
	\end{align*}
	where $r > 0$, $\alpha_{C}, \alpha_{Z} \in (0, 2]$,  and $\sigma_{C}, \sigma_{Z},\tau_{C}, \tau_{Z},\s_{C} ,\s_{Z}> 0$. Subscripts $C$ and $Z$ refer for copper and zinc, respectively.
	The maximum likelihood estimates of parameters for the univariate powered exponential model applied to the copper and zinc data are shown in the first line of Table \ref{tb:MLEbistable}. The fit suggests that the smoothness parameters $\alpha_C$  and $\alpha_Z$ for copper  and zinc,  respectively, are less than one. Copper and zinc have different scale parameters, $1/s_{C} = 94.8$ and $1/s_{s_Z} = 188.6$, therefore a flexible bivariate model is needed. In  our full bivariate powered exponential covariance model the diagonal elements are
	\begin{align*}\label{eq:copperzincmarg}
	C_{C}(r) & = \sigma_C^2 \exp\left(-(s_C r)^{\alpha_C}\right) + \tau_C^2  \mathds{1}(r =0) , \\
	C_{Z}(r) & = \sigma_Z^2 \exp\left(-(s_Z r)^{\alpha_Z}\right) + \tau_Z^2  \mathds{1}(r =0).
	\end{align*}
	On the off-diagonal we have
	\begin{equation*}\label{eq:copperzinccross}
	C_{CZ}(r)= C_{ZC}(r) = \rho \sigma_C \sigma_Z \exp\left(-(s_{CZ}r)^{\alpha_{CZ}}\right),
	\end{equation*}
	where $\alpha_C, \alpha_Z \in (0, 1]$, $\alpha_{CZ} \in (0, 2]$, $s_C, s_Z, s_{CZ} >0$ and $|\rho| \le 1$ satisfy the conditions of Theorem \ref{th:exp} and $\sigma_C, \sigma_Z, \tau_C, \tau_Z >0$.
	\begin{table}
		\centering
		\caption{Maximum likelihood estimates of parameters for the bivariate powered exponential model applied to the  copper and zinc data.}
		\label{tb:MLEbistable}
		\begin{tabular}{lccccccccccc}
			\hline
			\hline
			Model & $\sigma_C$ & $\sigma_Z$ &  
			$\alpha_C$ & $\alpha_Z$ & $\alpha_{CZ}$ &
			$1/s_{C}$ & $1/s_{Z}$ & $1/s_{CZ}$ &
			$\rho$ & $\tau_C$ & $\tau_Z$  \\ 
			\hline
			Independent & 0.69 & 0.35 & 0.77 & 0.90 & - &       94.8 &   188.6 &  - &       - &       0.09 &    0.1        \\ 
			Full &    0.7 &     0.36 &     0.74 &     0.77 &     0.77 &    90.6 &   189.3 &   115.0 &     0.64 &     0.04 &        0.07   \\ 
			Parsimonious & 0.7 &     0.36 &     0.76 &     0.76 &     0.76 &    91.5 &    198.8 &  118.6 &     0.62 &     0.07 &        0.07  \\
			\hline		
		\end{tabular}
	\end{table}
	The maximum likelihood estimates of the full bivariate powered exponential model agree with the independent univariate estimates, see Table \ref{tb:MLEbistable}. The copper and zinc standard deviations are  $\sigma_C = 0.70$ and $\sigma_Z = 0.36$ respectively. There are  nugget effects for copper ($\tau_C = 0.04$) and  for zinc ($\tau_C = 0.07$). The values of the estimated smoothness parameters $\alpha_C = 0.74$ and $\alpha_C = 0.77$ are closer to each other than in the independent model. This is probably due to the positive definiteness restrictions in Theorem \ref{th:exp}, which exclude some parameter combinations with very distinct scale and smoothness parameters and a high correlation, which is estimated as $\rho_{LC} = 0.63$. The estimate of $\rho_{LC}$  agrees well with the colocated empirical correlation.
	
	\begin{figure}
		\includegraphics[width=\maxwidth]{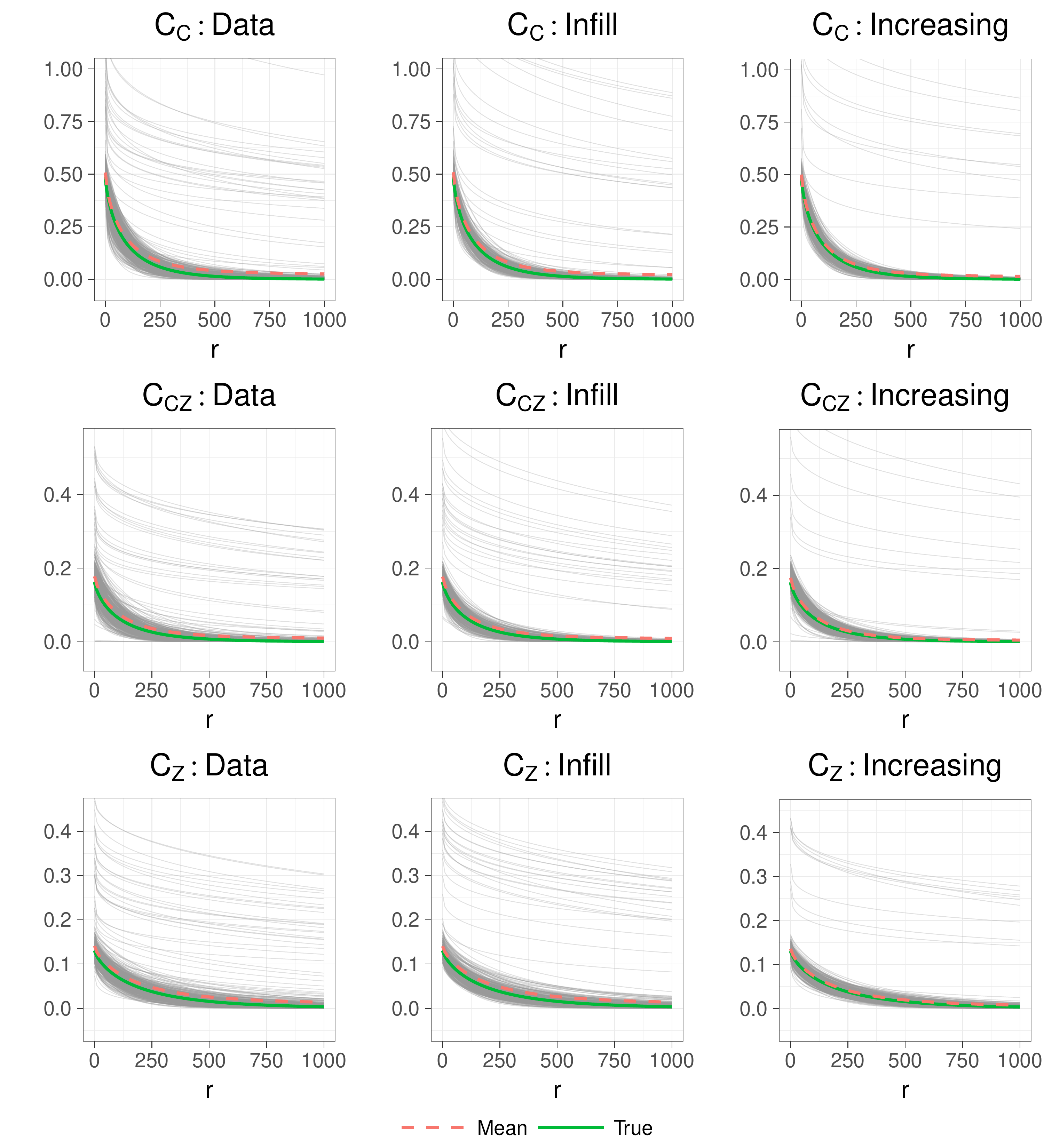} 	
		\caption{Fitted  bivariate powered exponential covariance models for 500 simulated bivariate random fields. The solid thick line is the original covariance model, with which the fields were simulated, the dashed line is the average of 500 fitted bivariate powered exponential models.}\label{fig:covariancesSim}
	\end{figure}

	\begin{figure}
		\centering
		\begin{subfigure}[b]{0.3\textwidth}
			\includegraphics[width=\maxwidth]{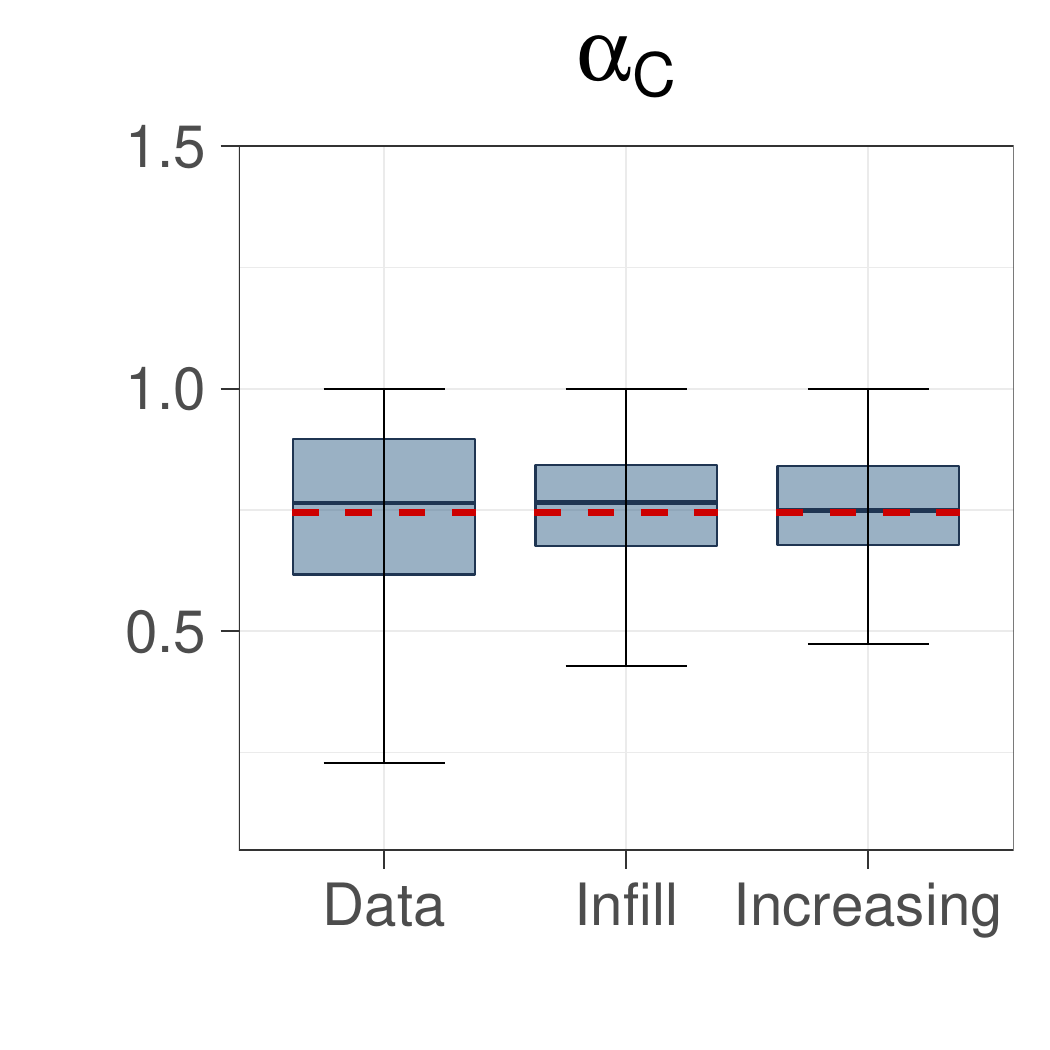} 	
		\end{subfigure}
		\hfil
		\begin{subfigure}[b]{0.3\textwidth}                 
			\includegraphics[width=\maxwidth]{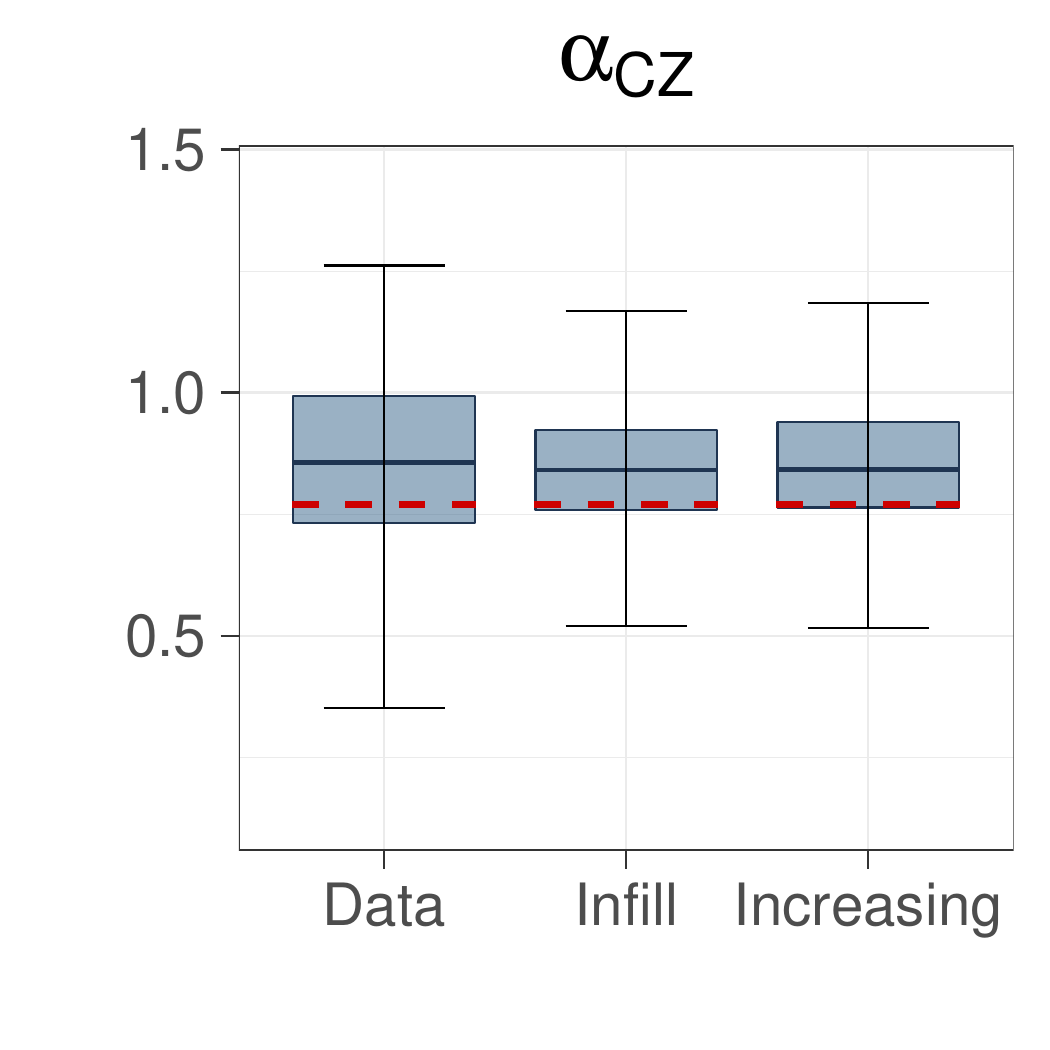} 	
		\end{subfigure}
		\hfil
		\begin{subfigure}[b]{0.3\textwidth}
			\includegraphics[width=\maxwidth]{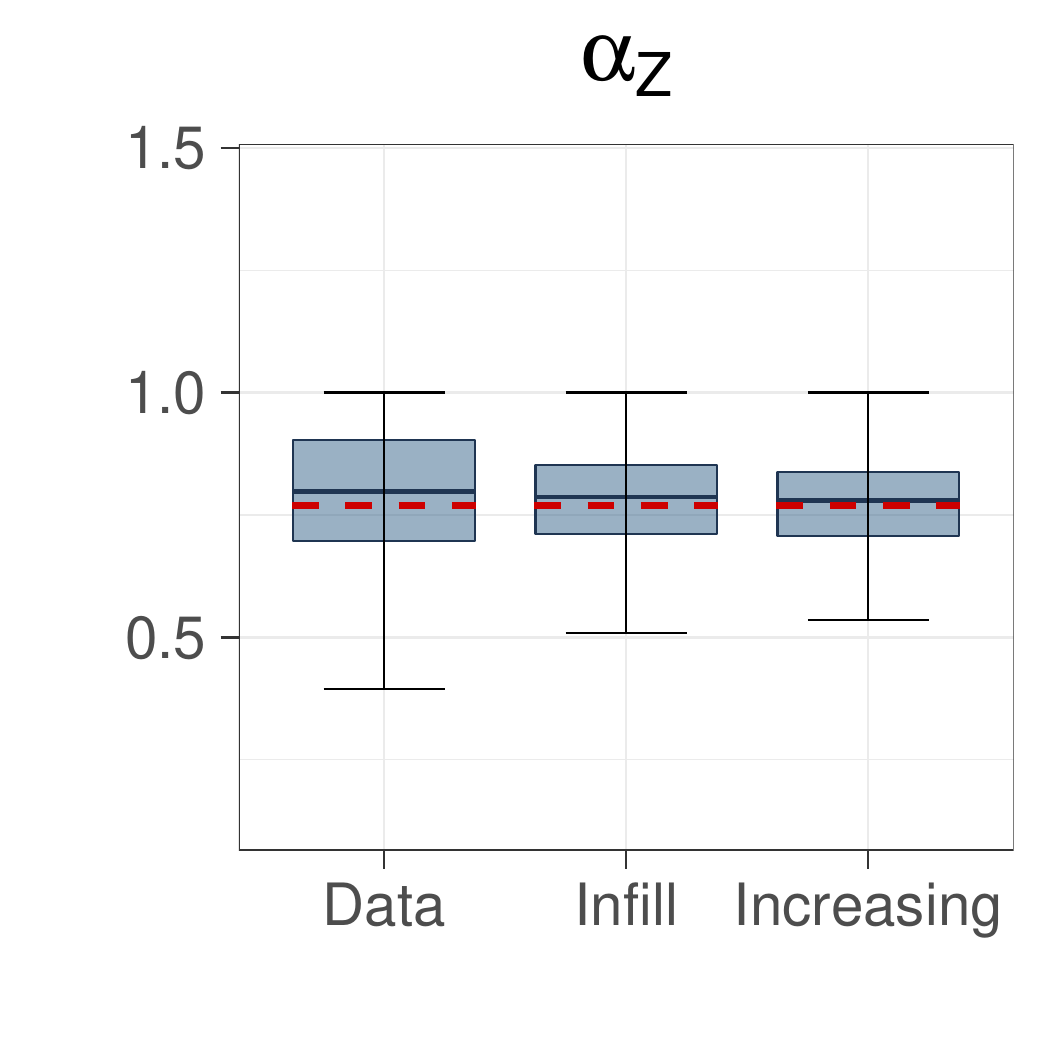} 	
		\end{subfigure}
		\\[3ex]
		\begin{subfigure}[b]{0.3\textwidth}
			\includegraphics[width=\maxwidth]{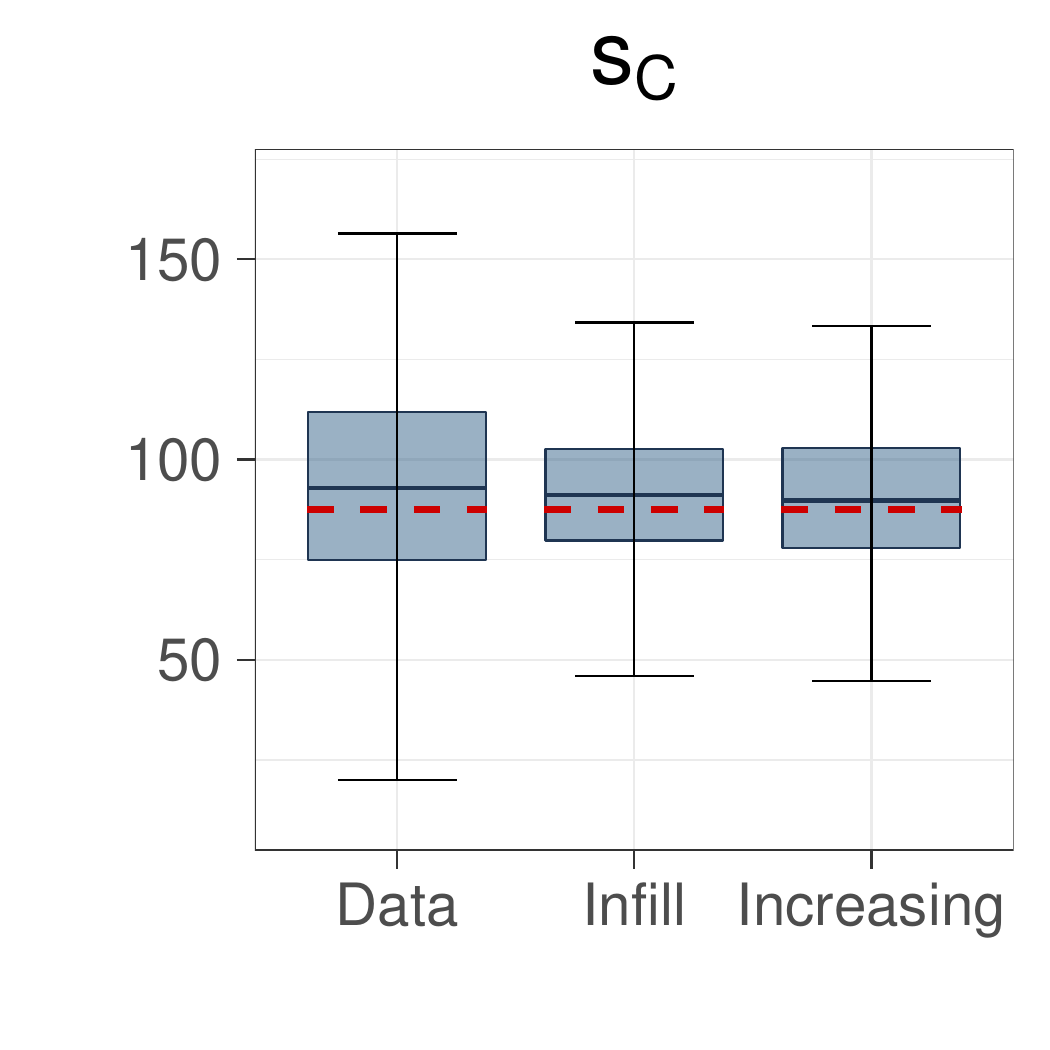} 	
		\end{subfigure}
		\hfil
		\begin{subfigure}[b]{0.3\textwidth}                 
			\includegraphics[width=\maxwidth]{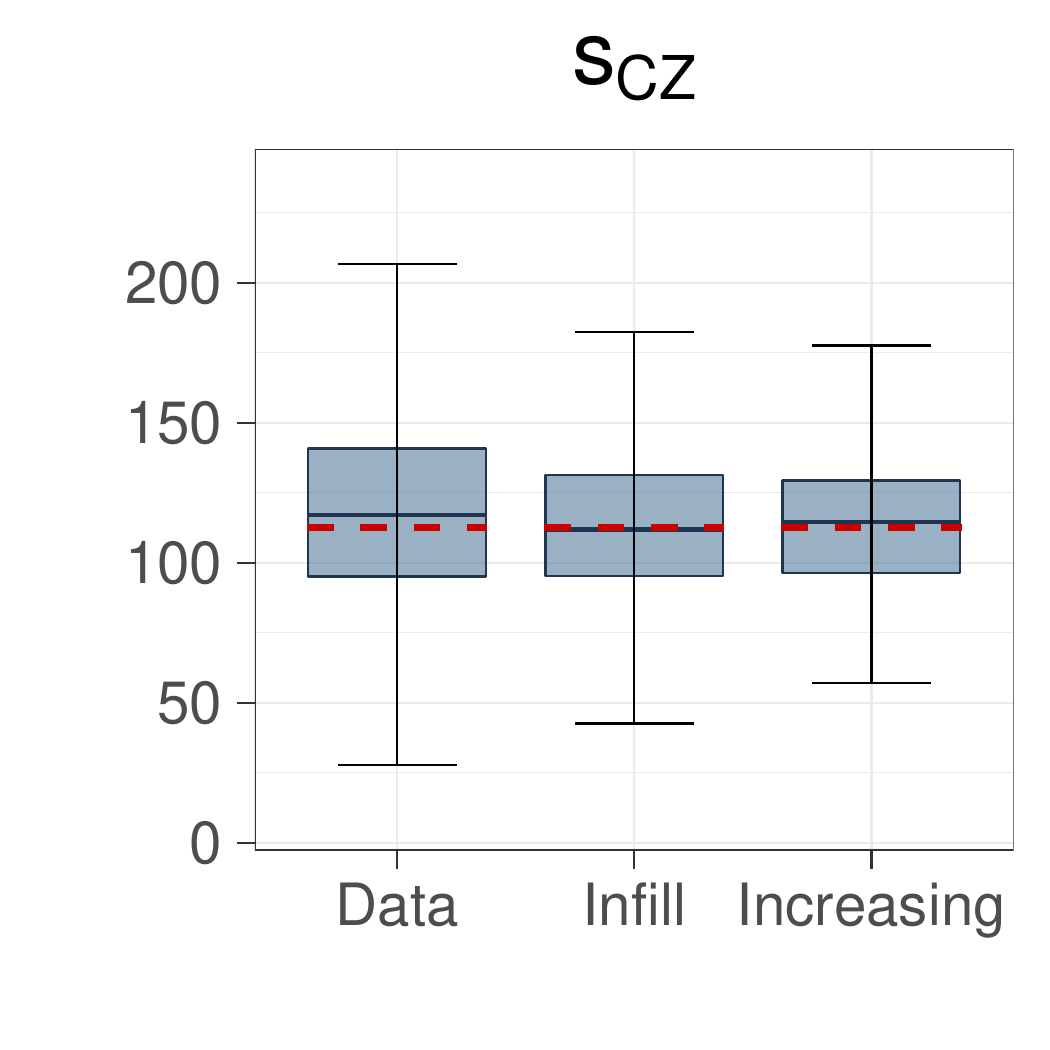} 	
		\end{subfigure}
		\hfil
		\begin{subfigure}[b]{0.3\textwidth}
			\includegraphics[width=\maxwidth]{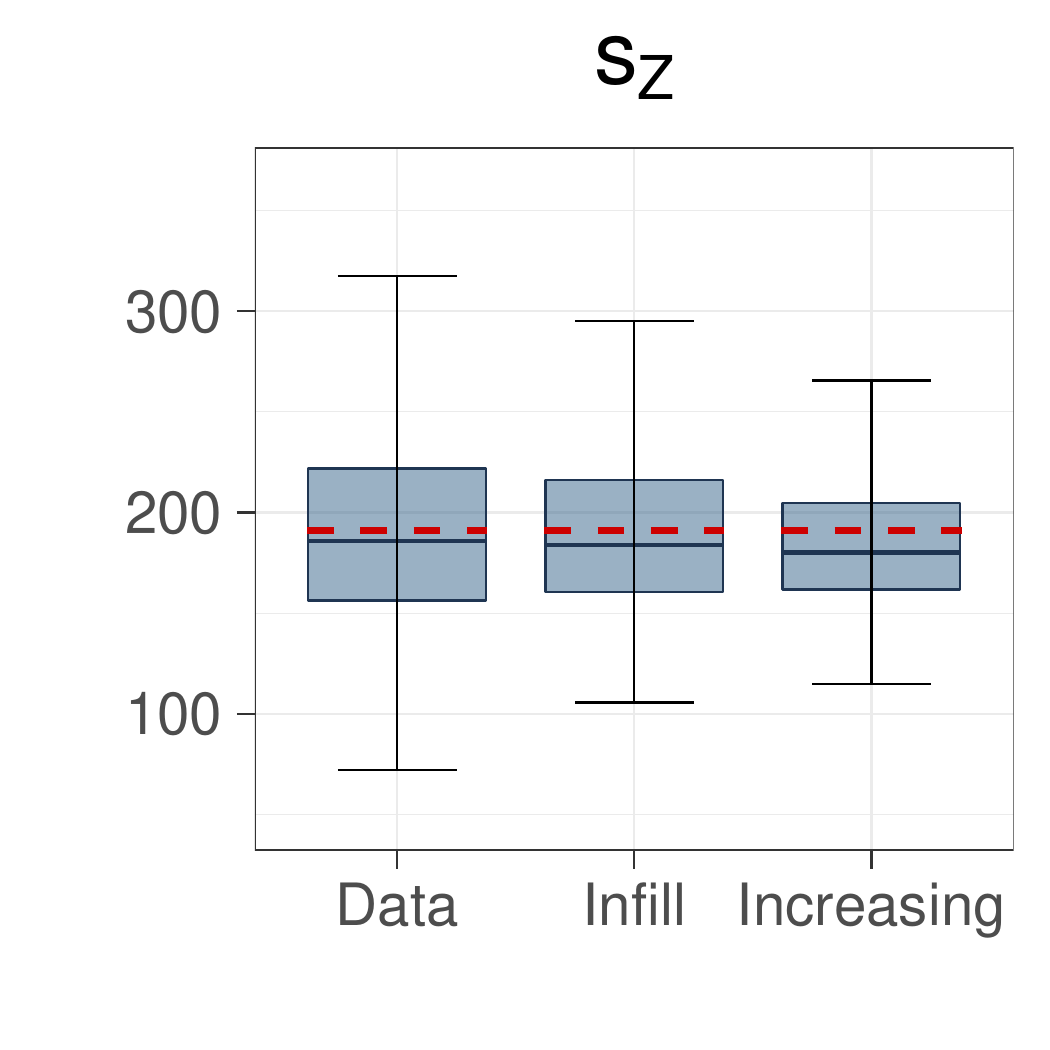} 	
		\end{subfigure}
		\\[3ex]
		\begin{subfigure}[b]{0.3\textwidth}
			\includegraphics[width=\maxwidth]{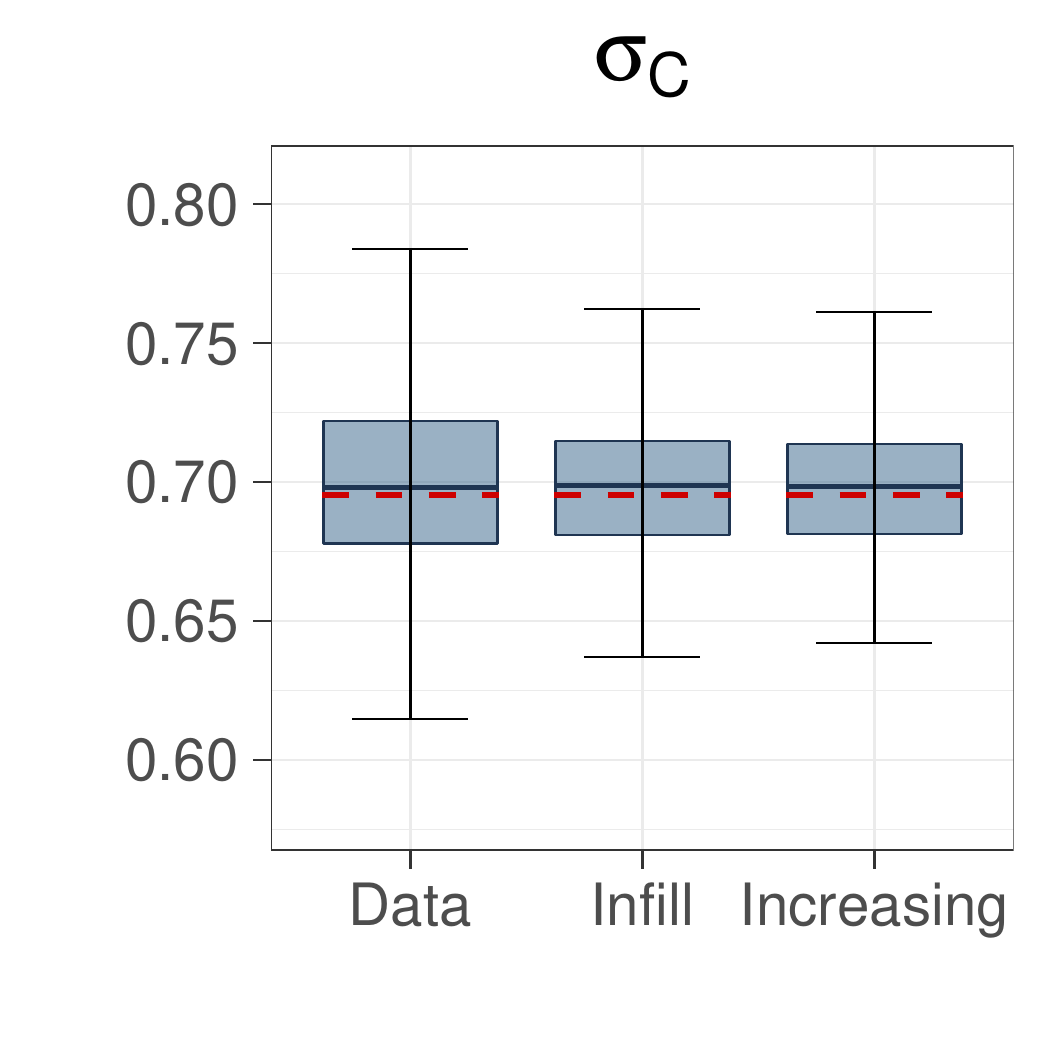} 	
		\end{subfigure}
		\hfil
		\begin{subfigure}[b]{0.3\textwidth}                 
			\includegraphics[width=\maxwidth]{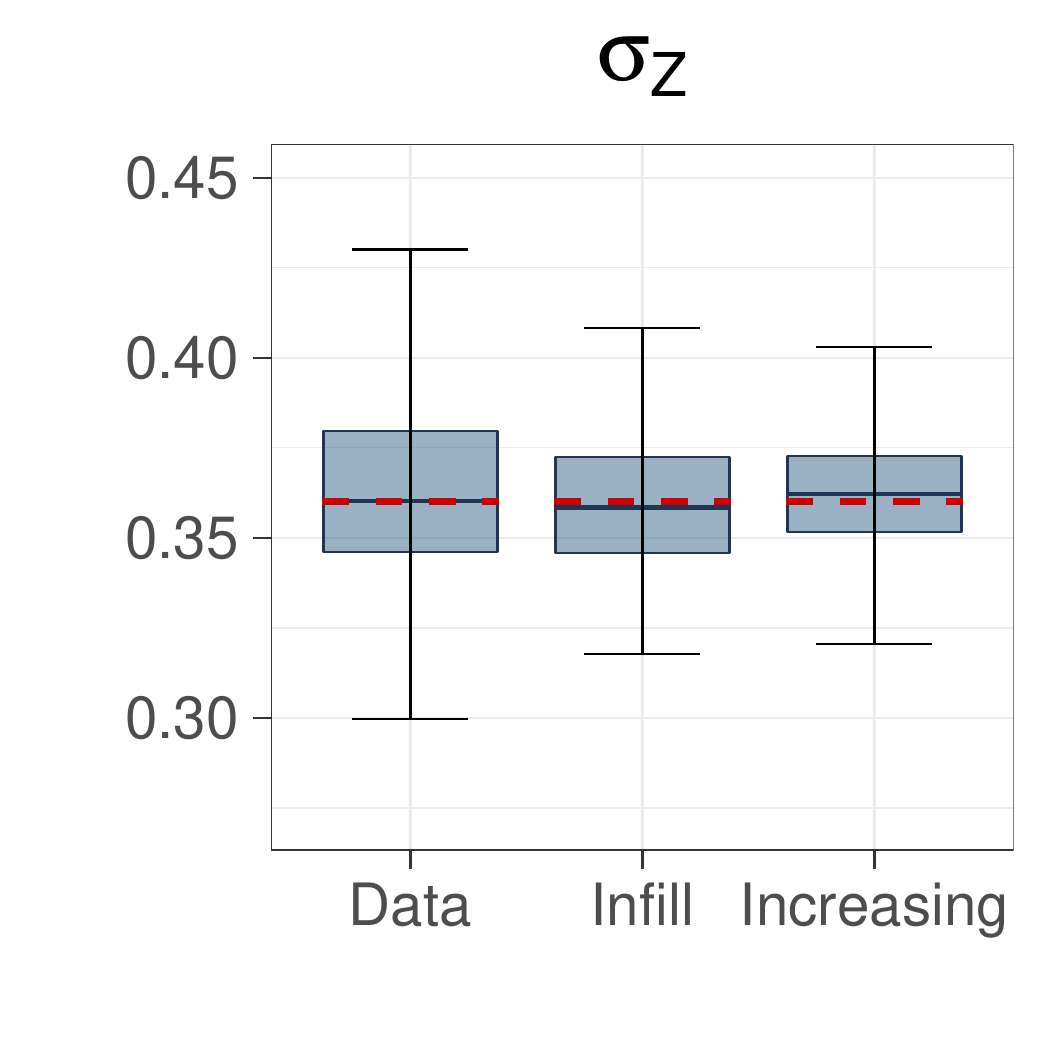} 	
		\end{subfigure}
		\hfil
		\begin{subfigure}[b]{0.3\textwidth}
			\includegraphics[width=\maxwidth]{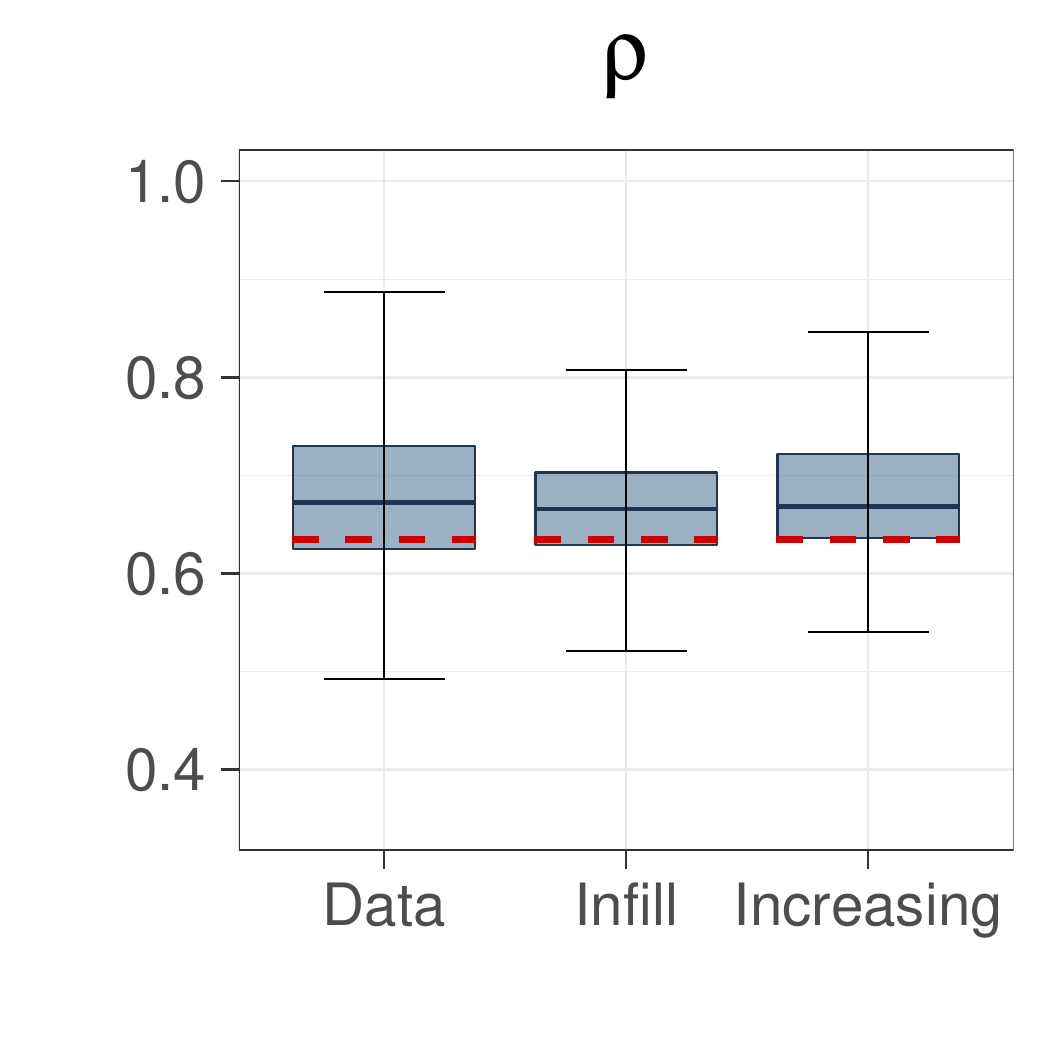} 	
		\end{subfigure}
		\\[3ex]
		\begin{subfigure}[b]{0.3\textwidth}
			\includegraphics[width=\maxwidth]{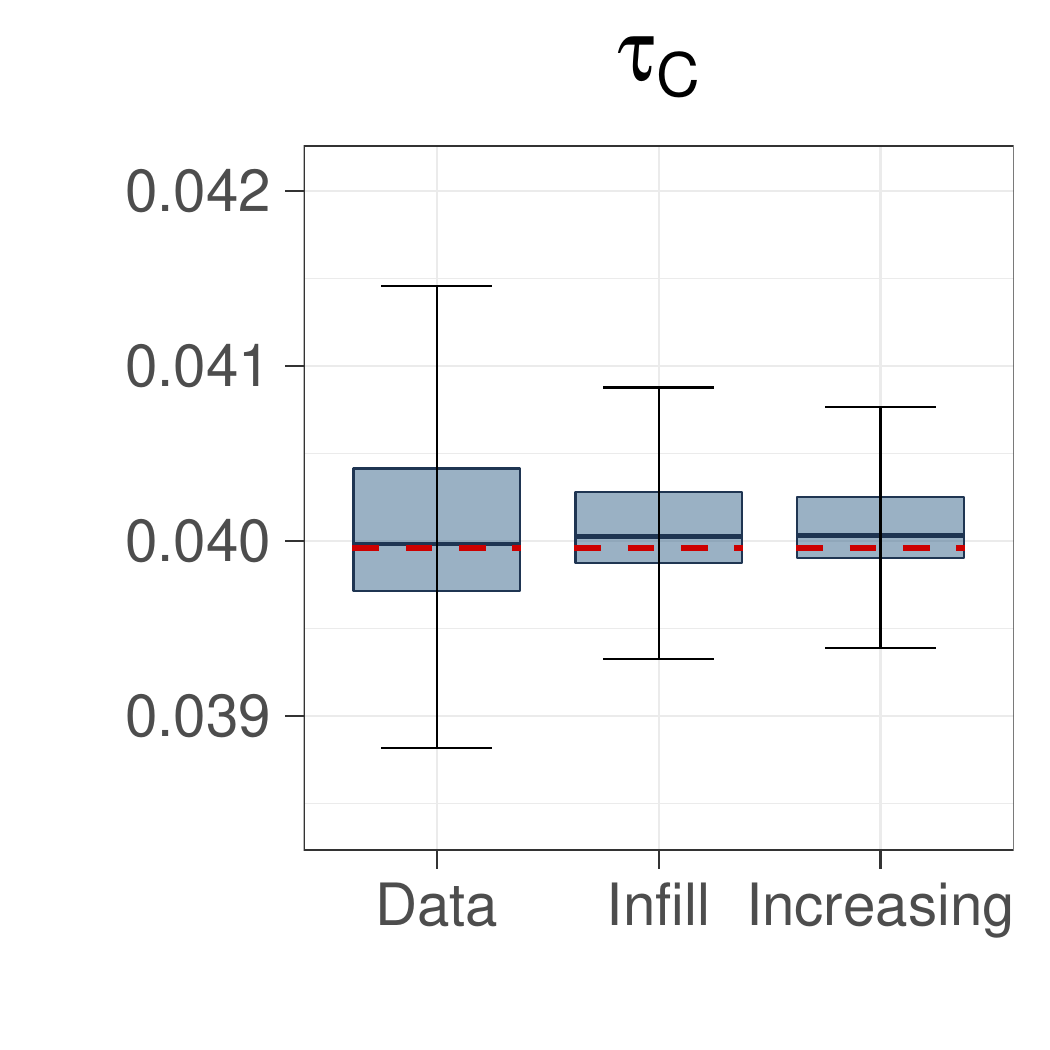} 	
		\end{subfigure}
		\hfil
		\begin{subfigure}[b]{0.3\textwidth}                 
			\includegraphics[width=\maxwidth]{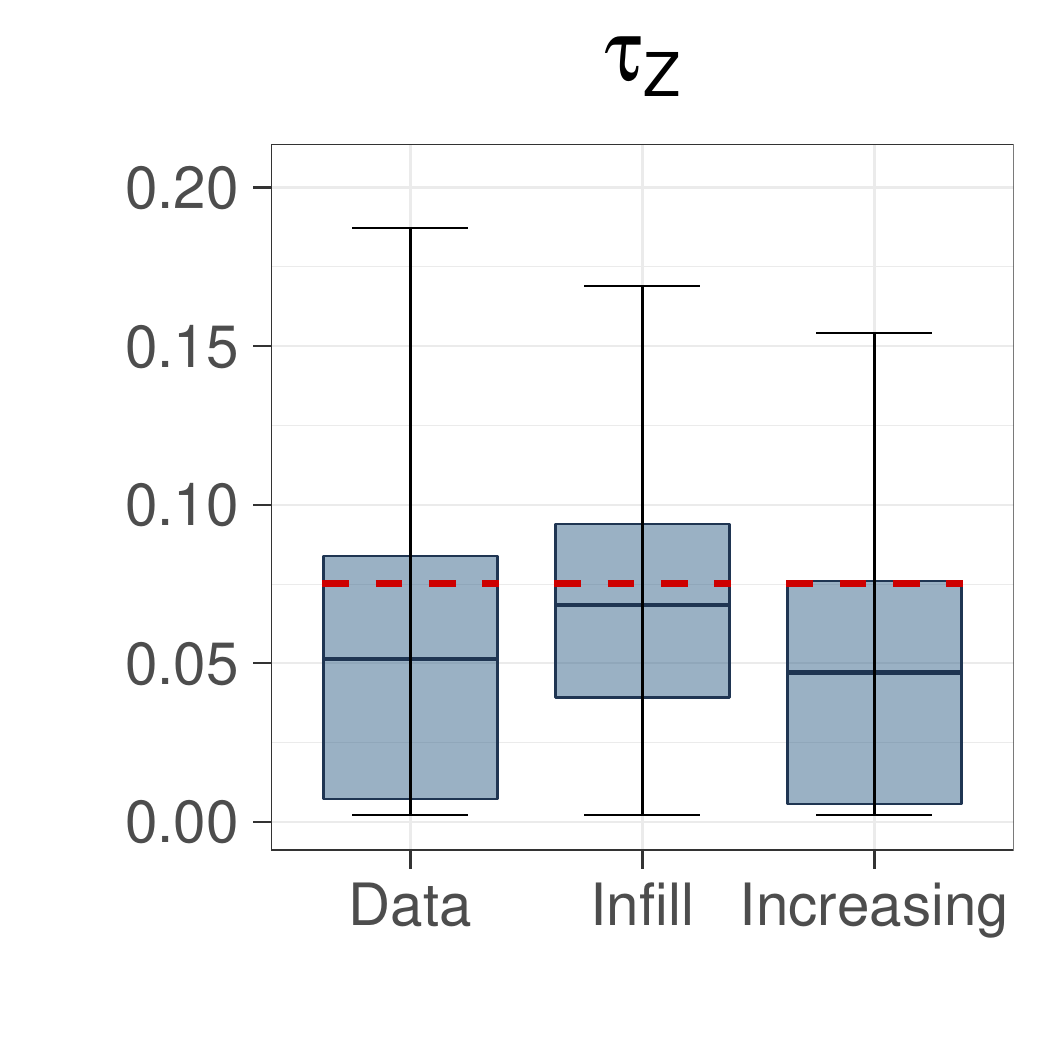} 	
		\end{subfigure}
		\caption{Results of the simulation study for the bivariate powered exponential model, summarized by boxplots of the ML estimates
			for $\sigma_C,$ $\sigma_Z,$ $\rho,$ $\alpha_C,$ $\alpha_{CZ},$ $\alpha_{Z},$ $s_C,$ $s_Z,$ $s_{CZ},$ $\tau_C,$ $\tau_Z$. The boxes range from the
			lower to the upper quartile, and the whiskers extend to the most extreme data point that is no more than 1.5 times the interquartile range 	from the box. The dashed horizontal lines are at the true values. }
		\label{fig:refit}
	\end{figure}

	Following \cite{gneiting2012matern}, in order to assess a typical finite sample variability in the estimation of the bivariate powered exponential model we perform a small simulation study. Specifically, we generate 500 realizations from the full bivariate powered exponential model with parameter values of Table \ref{tb:MLEbistable}. The simulations are done on a 50 by 50 square grid of the area 14.6 km$^2$. For each realization, we choose randomly 259 points of the grid and fit the bivariate powered exponential model by maximum likelihood. The fitted covariance functions are shown in Figure \ref{fig:covariancesSim}. The average of all 500 covariance functions (dashed line) is close to the original model (solid  line). The parameters estimates are summarized by the boxplots in Figure \ref{fig:refit}. The medians of estimates of $\sigma_C$, $\sigma_Z,$ $\rho,$ $s_C,$ $s_Z,$ $s_{CZ}$ are very close to their true values. The interquantile ranges of estimates of $\alpha_C$ and $\alpha_{Z}$ have a large overlapping area with the interquantile range of estimates of $\alpha_{CZ}$. 
	
	Similarly to \cite{gneiting2012matern}, we supplement these finite sample results with a view towards the two common forms of spatial asymptotics, infill and
	increasing domain. For infill asymptotics, we used the same simulation grid, but doubled the number of sample locations (to 518). For increasing domain asymptotics, we increased the domain size in both coordinate directions by a factor of $\sqrt{2}$, while doubling the number of sample locations (to 518), so as to retain
	the original sampling density. 	The factor two was chosen in order to keep the computing time at a reasonable level. Fitted covariance functions  and the boxplots of the corresponding estimates are also included in Figures \ref{fig:covariancesSim} and \ref{fig:refit}, respectively. Generally speaking, parameter estimates are seen to be tighter under both asymptotic frameworks.
	
	Since there is no strong evidence that $\alpha_C$, $\alpha_Z$, $\alpha_{CZ}$ are distinct for the full bivariate powered exponential model, we fit  a parsimonious bivariate powered exponential model with $\alpha_C = \alpha_L = \alpha_{LC} = \alpha$. In addition, we set $\tau_C = \tau_L = \tau$, since the medians of their estimates are close to each other.  Thus, our  parsimonious bivariate powered exponential model becomes
	\begin{align}\label{eq:copperzincbistpars}
	C_{C}(r) & = \sigma_C^2 \exp\left(-(s_C r)^{\alpha}\right) + \tau^2  \mathds{1}(r =0) , \\
	C_{Z}(r) & = \sigma_Z^2 \exp\left(-(s_Z r)^{\alpha}\right) + \tau^2  \mathds{1}(r =0) ,
	\end{align}
	and 
	\begin{equation}\label{eq:copperzincbistparscross}
	C_{CZ}(r)= C_{ZC}(r) = \rho \sigma_C \sigma_Z \exp\left(-(s_{CZ}r)^{\alpha}\right),
	\end{equation}
	where $\alpha \in (0, 1]$,  $s_C, s_Z, s_{CZ} >0$ and $|\rho| <1$ satisfy the conditions of Theorem \ref{th:exp} and $\sigma_C, \sigma_Z, \tau >0$. The parameter estimates of the parsimonious bivariate powered exponential model agree well with those of the full bivariate powered exponential model, see Table~\ref{tb:MLEbistable}. The likelihood of the parsimonious model is only 0.05 smaller than the likelihood of the full model, see Table \ref{tb:comparison}.
	
	Next, we fit  the full bivariate Mat\'{e}rn model, i.e.\
	\begin{align}\label{eq:copperzincbiwm}
	C_{C}(r) & = \sigma_C^2  M_{\nu_C}(\s_{C} r) + \tau_C^2  \mathds{1}(r =0) , \\
	C_{Z}(r) & = \sigma_Z^2  M_{\nu_Z}(\s_{Z} r) + \tau_Z^2  \mathds{1}(r =0) ,
	\end{align}
	and 
	\begin{equation}\label{eq:copperzinccrossbiwm}
	C_{CZ}(r)= C_{ZC}(r) = \rho \sigma_C \sigma_Z  M_{\nu_{CZ}}(\s_{CZ} r) ,
	\end{equation}
	where $\nu_L, \nu_C, \nu_{CZ}, \s_C, \s_Z, \s_{CZ}, \sigma_{C}, \sigma_{Z}, \tau  >0$, $|\rho| \le 1$,  $M_{\nu}(\s r) = 2^{1 - \nu } (\s r)^{\nu} K_{\nu} (\s r)/\Gamma(\nu)$, $K_{\nu} (r)$ is the modified Bessel function of the second kind and $\Gamma$ is the gamma function. The ML estimates  are displayed in Table
	\ref{tb:MLEbiwm}. The estimates of the variance  are close to those in the bivariate powered exponential model, whereas the estimated nugget effects are smaller than those in the bivariate powered exponential model. From the  estimates of the smoothness parameters $\nu_C = 0.3$ and $\nu_{Z} = 0.28$ we get the estimates of the fractal dimensions of copper and zinc fields, which are 2.7 and 2.72 respectively. These values slightly exceed the estimates of fractal dimension in the bivariate powered exponential models (2.63 for copper and 2.62 for zinc in the full model and 2.62 in the parsimonious one).
	\begin{table}
		\centering
		\caption{Maximum likelihood estimates of parameters for the bivariate Mat\'{e}rn model applied to the  copper and zinc data.}
		\label{tb:MLEbiwm}
		\begin{tabular}{llllllllllll}
			\hline
			\hline
			Model & $\sigma_C$ & $\sigma_Z$ &  
			$\nu_C$ & $\nu_Z$ & $\nu_{CZ}$ &
			$1/\s_{C}$ & $1/\s_{Z}$ & $1/\s_{CZ}$ &
			$\rho$ & $\tau_C$ & $\tau_Z$  \\ 
			\hline
			Full  &         0.7 &     0.37 &      0.3 &     0.28 &     0.32 &   155.1 &    337.8 &  185.7 &     0.66 &     0.02 &        0.01    \\ 
			\hline		
		\end{tabular}
	\end{table}
	
	The last model that we fit is the linear model of coregionalization with two latent powered exponential fields. As in the previous cases, we augment the model with nugget effects. We choose two latent fields in order to have a  comparable number of parameters to estimate. The covariance function thus becomes
	\begin{align}\label{eq:copperzinclmc}
	C_{C}(r) & = b_{11}^2 \exp\left(-(s_1 r)^{\alpha_1}\right) + b_{12}^2 \exp\left(-(s_2 r)^{\alpha_2}\right)  + \tau_C^2  \mathds{1}(r =0) , \\
	C_{Z}(r) & = b_{21}^2 \exp\left(-(s_1 r)^{\alpha_1}\right) + b_{22}^2 \exp\left(-(s_2 r)^{\alpha_2}\right)  + \tau_Z^2  \mathds{1}(r =0) ,
	\end{align}
	and 
	\begin{equation}\label{eq:copperzinccrosslmc}
	C_{CZ}(r)= C_{ZC}(r) = b_{11} b_{21} \exp\left(-(\s_1 r)^{\alpha_1}\right) +  b_{12} b_{22} \exp\left(-(\s_2 r)^{\alpha_2}\right)
	\end{equation}
	with $b_{11}, b_{21}, b_{12}, b_{22}, \s_1, \s_2 > 0$, $\alpha_1, \alpha_2 \in (0, 2]$. The ML estimates of the LMC model are displayed in the Table \ref{tb:LMC}. Similarly to the previous models,  the estimated smoothness parameters are close to each other, $\alpha_1 = 0.78$ and $\alpha_2 = 0.79$, whereas the scale parameters are clearly distinct, $1/s_1 = 91.32$, $1/s_2  = 240.04$. The estimated variances, which are given by $\sqrt{b_{11}^2+b_{12}^2} = 0.69$ for copper and by $\sqrt{b_{21}^2+b_{22}^2} = 0.35$ for zinc, agree well with the estimates in the bivariate powered exponential model and the bivariate Mat\'{e}rn model and so do the estimates of nugget effects.
	\begin{table}
		\centering
		\caption{Maximum likelihood estimates of parameters for the LMC model applied to the copper and zinc data.}
		\label{tb:LMC}
		\begin{tabular}{lllllllllll}
			\hline
			\hline
			Model & $b_{11}$ & $b_{12}$ &  
			$b_{21}$ &$ b_{22}$ & 
			$\alpha_{1}$ &	$\alpha_{2}$ &
			$1/\s_{1}$ & $1/s_{2}$ &
			$\tau_L$ & $\tau_C$  \\ 
			\hline
			LMC &         0.68 &       0.1 &      0.18 &      0.31 &      0.78 &      0.79 &     91.32 &    240.04 &       0.1 &         0.07 \\
			\hline		
		\end{tabular}
	\end{table}
	
	Table \ref{tb:comparison} contains the comparison between the bivariate powered exponential, the bivariate Mat\'{e}rn, the independent powered exponential  and the LMC fits. The full bivariate Mat\'{e}rn model achieves the highest likelihood. The parsimonious bivariate powered exponential model has the smallest value of AIC. Having the same number of parameters as the LMC, the parsimonious bivariate powered exponential model  has a higher likelihood value. All bivariate models have higher likelihood and smaller value of AIC than the independent powered exponential model. 
	\begin{figure}
		\includegraphics[width=\maxwidth]{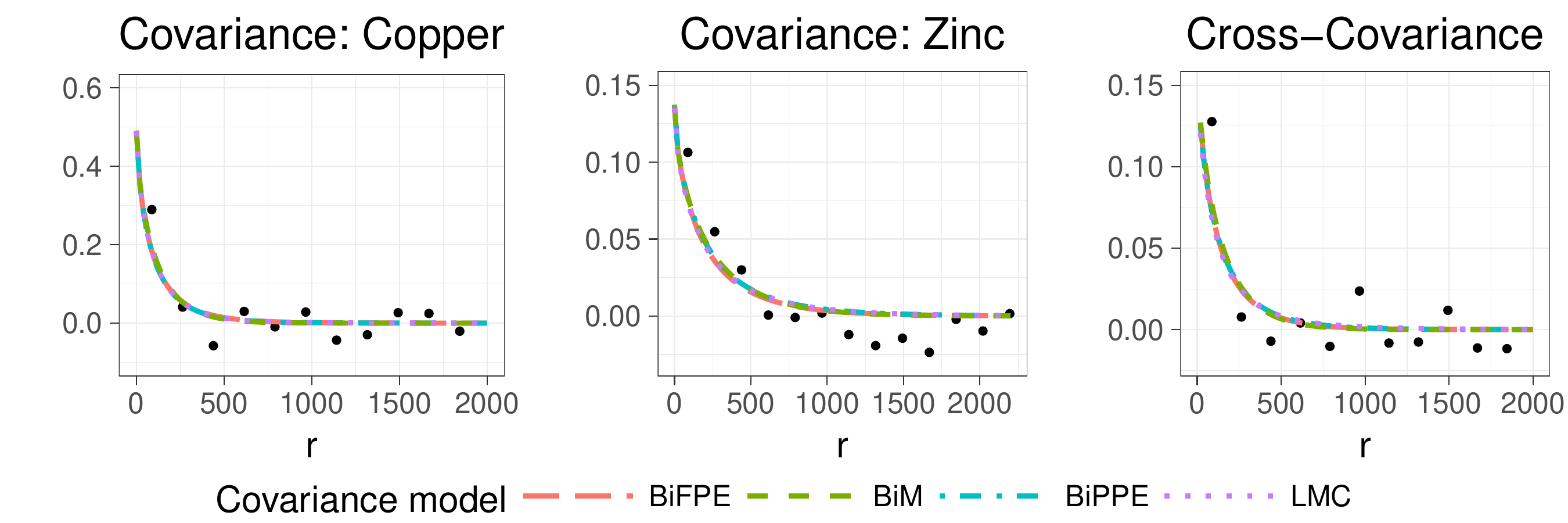} 	
		\caption{Empirical covariance and bivariate covariance functions for the copper and zinc data, with maximum likelihood fits under
			the full bivariate powered exponential model, (BiFPE; long dashed line),  
			the bivariate Mat\'{e}rn  (BiW; green dashed line),
			the parsimonious bivariate powered exponential model, (BiPPE;  dashed dotted line),  
			and the linear model of coregionalization (LMC; dotted line).}\label{fig:cov}
	\end{figure}
	
	We compare predictive performance of the models on the validation set. First, we take the logarithm of copper and zinc in the test set and then subtract the mean of logarithms of copper and zinc, respectively, from the training set.  At the test set locations we perform co-kriging  to predict the values for copper and zinc. Then we calculate  the mean absolute error (MAE), i.e.\ the average absolute error between the realization and the co-kriging point predictor. When we do not use the measurements of zinc from the test set for copper prediction, there is no gain in exploiting the bivariate models. The same holds for the zinc prediction without using copper values. Smaller MAE is achieved when the measurements of zinc concentrations are included for copper prediction and vice versa. The results are summarized in Table \ref{tb:comparison}. The bivariate models clearly outperform the independent model both in copper and zinc.

	\begin{table}
		\centering
		\caption{Comparison of the bivariate powered exponential, the bivariate Mat\'{e}rn, the independent powered exponential  and the LMC models for copper and zinc data.}
		\label{tb:comparison}
		\begin{tabular}{lccccc}
			\hline
			\hline
			Model & \thead{Number of \\ parameters}   & Log likelihood & AIC & \thead{MAE \\ (copper)} & \thead{ MAE \\ (zinc)} \\  
			\hline
			Full bivariate powered exponential & 11 & -181.42   & 384.84 &  0.5544 &  0.2316  \\
			Parsimonious powered exponential & 8 & -181.47   & 378.93 & 0.5551 &  0.2320  \\
			Full bivariate Mat\'{e}rn & 11 & -181.21   & 384.42   &  0.5593 &   0.2347  \\
			LMC & 10 &  -181.59 & 383.19 &  0.5534 &  0.2292  \\
			Independent powered exponential & 8 &  -245.6   & 507.22 &0.5764 &  0.2742  \\
			\hline		
		\end{tabular}
	\end{table}

\section{Acknowledgments}
	The authors gratefully acknowledge support by Deutsche Forschungsgemeinschaft through the Research Training Group RTG 1953. OM acknowledges support by Mannheim University through the dissertation completion grant. The authors are grateful to Tilmann Gneiting for suggestions and valuable comments on the paper.

\begin{center}
		\appendix
\end{center}

{\centering\section{Bivariate spherical model}\label{app:sph}}

	\begin{lemma}\label{lemma:schoenberg}
	Let $[f_{ij}]_{i,j = 1}^m,$  be the spectral density matrix of  an $m$-variate continuous covariance function $\textbf{C}$. Then the set of roots of $f_{ij}$ is a superset of the roots of $f_{ii}$ and the roots of $f_{jj}$ for any $i,j = 1, \dots, m,$ $i \neq  j$.
	\end{lemma}
	\begin{proof}
	 The lemma follows directly from Schoenberg's theorem.
	\end{proof}	
		
	\begin{theorem}\label{thm:sequence}
	Let $[f_{ij}]_{i,j = 1}^m,$  be the spectral density matrix of a stationary and isotropic covariance function~$\textbf{C}$, $i, j = 1, \dots, m$. Suppose that  there exists a positive strictly increasing sequence $(u_k)_{k \in \N}$ such that the following properties hold:
	\begin{enumerate}[label=(\roman*)]
		\item for any $s < 1$, there exists a $k_0 \in \N$ with $u_{k_0}/s \neq u_k$ for all $k \in \N,$ \label{item:sequences}
		\item for some $s_{ij} > 0$ the elements of the sequence $(s_{ij} u_k)_{k \in \N}$  constitute all roots of $f_{ij}$, $i, j = 1, \dots, m$. \label{item:sequencef}
	\end{enumerate}
	Then $\s_{ij} = s$ for some $s > 0$ and all $i, j = 1, \dots, m$. 	
	\end{theorem}
	\begin{proof} We prove the theorem for $m = 2$. The proof for $m > 2$ follows immediately from the properties of positive definite matrices and their determinants.

We denote by $A_{ij} = \{s_{ij} u_k, \ k \in \N\},$  the set of roots of the function $f_{ij},$ $i,j = 1, 2$. For any $i \in \{1, 2\}$ we have:

		\begin{itemize}
			\item if $s_{12} > s_{ii}$, then  $s_{ii} u_1 \notin A_{12} $ and by Lemma \ref{lemma:schoenberg}, the function $\textbf{C}$  cannot be positive definite.
			\item $ s_{12} < s_{ii}$, then by condition \ref{item:sequences} there exists a $k_0$ such that 
			$\frac{s_{ii}}{s_{12}} u_{k_0} \neq u_k$ for all $k \in \N$ and therefore $s_{ii}u_{k_0} \notin A_{12}$. Again, by Lemma~\ref{lemma:schoenberg}, the function $\textbf{C}$  cannot be positive definite.
		\end{itemize}
\end{proof}

\begin{lemma}\label{lemma:tan}
	Let  $(u_k)_{k \in \N}$ be a sequence such that  $u_k - a k \uparrow b$ strictly monotonically for some $a > b > 0$ as $k$ tends to infinity. Then for any $s < 1$, there exists a $k_0 \in \N$ such that $u_{k_0}/s \neq u_k$ for all $k \in \N$.
\end{lemma}
\begin{proof}
	We prove the lemma by contradiction. Note that $u_k$ is strictly monotone for $k \ge k_0$ and some $k_0  \in \N$. Suppose that there exists an $s <1$ such that for all $k \in \N$  there exist $l_{k} \in \N$ with $u_{k}/s =  u_{l_{k}}$. 
	First note that there exists an $N \in \N$ such that for every $k \ge N$ the corresponding $u_k$ lies inside the interval $\left(b + a (k-1), b + a k \right)$ and there exists a decreasing sequence $\varepsilon_k \downarrow 0,$ $\varepsilon_k  \in (0, a),$ such that 
	\begin{equation*}
	u_k = b + a k  -  \varepsilon_k.
	\end{equation*}
	For any $s < 1$, there exists an $n \in \N$ and $0 \le c < 1$ such that  $a/s = a n - a c$.  We consider $k \ge k_0$. Then $l_k > k.$ 
	
	Consider the following cases.
	\begin{enumerate}[label=(\roman*)]
		\item $c = 0$. There exist $n_b \in \N$ and $c_b \in [0, 1)$ such that $b/s = b + a(n_b - c_b)$. Then we have 
		\begin{align*}
		u_{l_k} = \frac{u_{k}}{s}  = \frac{b + ak - \varepsilon_k   }{s} 
		= b + a(n_b - c_b) + ank - \frac{\varepsilon_k   }{s}  
		= b + a(n_b + nk)  - \left(ac_b+ \frac{\varepsilon_k   }{s} \right).
		\end{align*}
		We choose  $k$	large enough so that $0 < ac_b+ \frac{\varepsilon_k   }{s} < a$. 	Since 
		$$b +  a(n_b + kn - 1)  < u_{l_k} <  b +  a(n_b + kn),$$ 
		we get $l_k = n_b + kn$ and $\varepsilon_{n_b + kn} = ac_b+ \varepsilon_k /s$.  But then it follows that  $\varepsilon_{l_k} > \varepsilon_k$, which cannot be true, since $(\varepsilon_k)_{k \in \N}$ is a decreasing sequence.
		
		\item $c > 0$.	By our assumption, for $u_{k+1}$  there exists $l_{k+1} > k+1$, such that $\frac{u_{k+1}}{s} = u_{l_{k+1}}$.	
		We obtain
		\begin{align*}
		\frac{u_{k+1}}{s} & = \frac{u_k + a -(\varepsilon_{k+1} - \varepsilon_k )  }{s} \\
		&= u_{l_k} + a n -  a c - \frac{\varepsilon_{k+1} - \varepsilon_k}{s} \\
		& = b + a l_k - \varepsilon_{l_k} + a n -  a c - \frac{\varepsilon_{k+1} - \varepsilon_k}{s} \\
		& = b + a (l_k + n)  -  \left(a c +  \varepsilon_{l_k} + \frac{\varepsilon_{k+1} - \varepsilon_k}{s}\right) 
		\end{align*}
		Choose $k$ large enough, so that $0 < a c +  \varepsilon_{l_k} + \frac{\varepsilon_{k+1} - \varepsilon_k}{s} < a $. Then $l_{k+1} = l_k + n$ and $\varepsilon_{l_k + n} = a c +  \varepsilon_{l_k} + \frac{\varepsilon_{k+1} - \varepsilon_k}{s}$. Note that $l_k + n \to \infty$ and  $\varepsilon_{l_k + n}  \to ac$ when $k \to \infty$, which is a contradiction, since $c > 0$. 	
	\end{enumerate} 		
\end{proof}

\begin{corollary}
The bivariate spherical model \eqref{eq:sph} is a valid covariance model in $\R^3$ if and only if $\rho = 0$ or $\s_{11} = \s_{12} = \s_{22}$. 	
\end{corollary}
\begin{proof}
		The spectral density of the univariate spherical correlation functions is 
	\begin{align*}
	f(u) =  \frac{3 s}{\pi^2 u^6} (u \cos(u/2s) - 2s \sin (u/2s)  )^2  	
	\end{align*}
	Clearly, $f$ is pseudo periodic and takes infinitely many zeros on $u > 0$. We denote by $u_k$, $k \in \N,$ the roots of the function $\tilde{f}(u) = u  - \tan(u)$ on $u > 0$. Then the roots of the spectral density $f_{ij}$ are $2s_{ij} u_k$, $k \in \N$, $i, j = 1,2$. Since $u_k \uparrow \frac{\pi}{2} + \pi k$ as $k \to \infty$, Lemma \ref{lemma:tan} and Theorem \ref{thm:sequence} prove the corollary.
\end{proof}

{\centering\section{Sufficient conditions for positive definiteness}\label{app:suff}}
	
	\cite{porcu2011} provide the following construction principle for multivariate covariance models.  
	\begin{theorem}\label{th:porcu}
		\begin{enumerate}[label=\Alph*.]
			\item Let $(\Omega, \mathcal{F}, \mu)$ be a measure space and $E$ be a linear space. Assume that the family of matrix-valued functions $A(x, u) = [A_{ij}(x, u)] : E \times \Omega \mapsto \R^{m \times m}$ satisfies the following conditions: \label{item:porcua}
			\begin{enumerate}[label=(\alph*)]
				\item for every $i, j = 1, \dots, m$ and $x \in E$, the functions $A_{ij}(x, \cdot )$ belong to $L_1(\Omega, \mathcal{F}, \mu)$; \label{item:porcua1}
				\item $A(\cdot, u)$ is a positive definite matrix-valued function for $\mu$-almost every $u \in \Omega$. \label{item:porcua2}
			\end{enumerate}
			Let 
			$$
			C(x) := \int_{\Omega} A(x, u)d\mu(u) = \left[\int_{\Omega} A_{ij}(x, u)d\mu(u) \right]_{i,j=1}^m, \qquad x \in E.
			$$
			Then $\C$ is a positive definite matrix-valued function in $E$.
			\item Conditions (a) and (b) in part \ref{item:porcua} are satisfied when $A(x, u) = k(x, u)g(x, u)$, where the maps $k(x, u): E \times \Omega \mapsto \R$ and $g(x, u) = \left[g_{ij}(x, u)\right]_{i,j=1}^m : E \times \Omega \mapsto \R^{m \times m}$ satisfy the following conditions:  \label{item:porcuB}
			\begin{enumerate}
				\item for every $i, j = 1, \dots, m$ and $x \in E$, the functions $k(x, \cdot)g_{ij}(x, \cdot)$ belong to  $L_1(\Omega, \mathcal{F}, \mu)$;\label{item:porcu1}
				\item $k( \cdot, u)$ is positive definite for  $\mu$-almost every $u \in \Omega$;\label{item:porcu2}
				\item $g(\cdot, u)$ is a positive definite matrix-valued function  or $g(\cdot, u) = g(u)$ is a positive definite matrix for $\mu$-almost every $u \in \Omega$.\label{item:porcu3}
			\end{enumerate}
		\end{enumerate}
	\end{theorem}
	
	Starting from known functions $k$ and $g_{ij}$, \cite{porcu2013radial} and \cite{daley2015}, see also \cite{randomFields}, construct new compactly supported multivariate covariance functions. Our approach, inspired by  \cite{gneiting1999radial}, is different; we consider the model \eqref{eq:class} as a candidate for a multivariate covariance function and then find the corresponding $g_{ij}$, which depend on parameters $s_{ij}$, $\bm{\theta_{ij}}$, and the parameter set which guarantees its positive definiteness.

	\begin{proof}[Proof of Theorem \ref{thm:scale}]
	In Theorem \ref{th:porcu} B we take  a Euclidean space $\R^n,$ $n \in \{1, 3\},$ as $E$ and the Lebesgue measure as $\mu$. 	We first prove the assertion in $\R$. We take  $k(r, u) = \left(1 - \frac{r}{u} \right)_+$, $g_{ii}(u) =  \sigma_i^2 u \psi_{ii}''(u)$, $i = 1, 2,$ and $g_{12}(u) = g_{21}(u) = \rho \sigma_1 \sigma_2 u \psi_{12}''(u)$ for $r \ge 0$, $u > 0$ and such that $\psi''_{ij}(u)$ are defined. We check the conditions of Theorem \ref{th:porcu} B consequently. Conditions \ref{item:firstderiv} and \ref{item:firstderiv2} allows us to apply integration by parts in the following integral, see for example Chapter 10.13 in \cite{apostol1974mathematical}, 
	\begin{align}\label{eq:integrability}
	\int_{0}^{\infty} u \psi_{ij}''(u) du & = u \psi_{ij}'(u) |_0^{\infty} - \int_{0}^{\infty}  \psi_{ij}'(u) du   = \psi_{ij}(0) < \infty.
	\end{align}
	From  equation \eqref{eq:integrability} follows the  condition  \ref{item:porcu1} in Theorem \ref{th:porcu}. Clearly, $k(\cdot, u)$ is a positive definite function in $\R$ for $u >0$ and therefore \ref{item:porcu2} in Theorem \ref{th:porcu}  holds.  Condition \ref{item:porcu3} in Theorem \ref{th:porcu}  is satisfied due to condition \ref{item:pd}. Then the following matrix-valued function is positive definite 	
	\begin{equation}\label{eq:gmatrix}
	\begin{bmatrix}
	\sigma_{1}^2 \int_{0}^{\infty} \left(1 - \frac{r}{u} \right)_+ u \psi''_{11} (u)du & 
	\rho \sigma_{1}\sigma_{2} \int_{0}^{\infty} \left(1 - \frac{r}{u} \right)_+ u \psi_{12}''(u) du \\
	\rho \sigma_{1}\sigma_{2} \int_{0}^{\infty} \left(1 - \frac{r}{u} \right)_+  u \psi_{12}''(u) du &  
	\sigma_{2}^2 \int_{0}^{\infty} \left(1 - \frac{r}{u} \right)_+ u \psi''_{22}(u) du
	\end{bmatrix}.
	\end{equation}
	To simplify the function \eqref{eq:gmatrix} we apply integration by parts again. For $r \ge 0$ we have 
	\begin{align}
	\int_{0}^{\infty} \left(1 - \frac{r}{u} \right)_+ u \psi_{ij}''(u)du  
	& = \int_{r}^{\infty} (u - r) \psi_{ij}''(u)  du \label{eq:integralcont} \\
	& = \int_{r}^{\infty} (u - r) d \psi_{ij}'(u)  \notag \\
	& =  \left. (u - r) \psi_{ij}'(u) \right|_r^{\infty} 
	- \int_{r}^{\infty}\psi_{ij}'(u)   d u \notag \\
	& = \psi_{ij}(r). \notag 
	\end{align}
	Thus, \eqref{eq:gmatrix} and \eqref{eq:class} are the same matrices. 
	
	The proof for $\R^3$ is analogous with $k(r, u) = \left(1 - \frac{r}{u} \right)_+ - \frac{r}{2u}\left(1 -  \frac{ r^2}{u^2}\right)_+$ and $g_{ii}(u) = \frac{1}{3}\sigma_i^2  ( u \psi_{ij}''(u) - u^2 \psi_{ij}'''(u)),$ $i= 1, 2$, $g_{12}(u) =  \frac{1}{3} \rho \sigma_1 \sigma_2 ( u \psi_{12}''(u) - u^2 \psi_{12}'''(u)),$ $r \ge 0$, $u > 0$ and such that $\psi_{ij}(u)''',$ $i, j = 1,2,$ are defined.
	\end{proof}

	The functions $k(r, u)$  are equal to Euclid's hat function, $k(r, u) = h_n(r/u), n = 1, 3$ \citep{gneiting1999radial}. Thus, Theorem \ref{thm:scale} can be generalized to higher dimensions with corresponding functions $h_n$, but it requires the calculation of higher order derivatives. The generalization of Theorem \ref{thm:scale} for processes with more than two components is straightforward. 	Theorem \ref{thm:scale} can be seen as a generalization of the criteria of P\'{o}lya type for radial positive
	definite functions in $\R$ and $\R^3$ (cf. Gneiting (2001); Gneiting et al. (2006)) for bivariate 	fields.
	Condition \ref{item:firstderiv} in $\R$ and $\R^3$ is  not restrictive and fulfilled by many model classes, including the Mat\'{e}rn model.
\begin{proof}[Proof of Theorem \ref{th:exp}]
	Functions $\psi_{ij}(r|\alpha_{ij}, s_{ij})$, $i, j = 1, 2$ of the bivariate powered exponential model satisfy the requirements of Theorem \ref{thm:scale}. Inequality \eqref{eq:exprho} follows directly from the inequalities \eqref{eq:inf}  and \eqref{eq:inf2}.
	All factors of the right-hand side of  inequality  \eqref{eq:exprho} are positive for $r > 0$. That means that the infimum can be zero only at    $r = 0$ or $r = \infty$. Clearly, for the parameters values given in  $(i) - (iv) $, the infimum is positive  and it is zero for other parameter values. 
	Consider now the case  $\alpha_{12} < (\alpha_{11} + \alpha_{22})/2$. Note that for $\alpha \in (0, 1)$ the spectral density $f$ of   $\psi(r) = \exp\left(-r^{\alpha}\right)$, $ r > 0,$ decays at infinity as	
	\begin{equation*}
	f(u) \aspropto   u^{-\alpha-n}  \text{ as } u \to \infty, 
	\end{equation*}	
	This follows from Tauberian theorem \citep{bingham1972tauberian} and Remark 35 in Chapter 2 of \cite{yaglom19872}.  
	Then by Scoenberg's theorem, the bivariate powered exponential model requires necessarily $\alpha_{12} \ge (\alpha_{11} + \alpha_{22})/2$ unless $\rho = 0$.

\end{proof}	

\begin{proof}[Proof of Theorem \ref{th:cauchy}]
	Functions $\psi_{ij}(r|\alpha_{ij}, \beta_{ij}, s_{ij})$, $i, j = 1, 2,$ of the bivariate generalized Cauchy model satisfy the requirements of Theorem \ref{thm:scale}. Inequality~\eqref{eq:cauchyrho} follows from inequalities \eqref{eq:inf}  and \eqref{eq:inf2}.  Analogously to the bivariate powered exponential model, all factors of the right-hand side of  inequality  \eqref{eq:cauchyrho} are positive for $r > 0$. That means that the infimum can be zero only at    $r = 0$ or $r = \infty$. Clearly, for the parameter values given in  \ref{item:cauchy6} the infimum is positive and it reaches zero at infinity for the parameter values 
	in  \ref{item:cauchy5}. The cases \ref{item:cauchy1} and \ref{item:cauchy2} follow from the Schoenberg's theorem and the asymptotics of the generalized Cauchy spectral density, see \cite{lim2009gaussian}. 
\end{proof}	

The restriction $\alpha_{12} < (\alpha_{11} + \alpha_{22})/2$ necessarily leads to the  independence of the components in both the bivariate powered exponential model and the bivariate generalized Cauchy model. The same restriction is imposed on smoothness parameters in the full bivariate  Mat\'{e}rn model \citep{gneiting2012matern} and is common for all models of the type \eqref{eq:class}. It stems from the asymptotic behaviour of the spectral density at infinity. Similar condition caused by the asymptotic behaviour of the spectral density at zero 
is imposed on the long range parameters $\beta_{ij},$ $i, j = 1, 2,$ in the bivariate generalized Cauchy model.  For functions with unknown spectral densities Tauberian theorems can be used to determine the asymptotic behaviour of the spectral measure. If spectral density is non-increasing, then its asymptotic behaviour can be calculated directly. Note that if $\psi_{ij}$ is differentiable for $r > 0$ and  $\psi'_{ij}$ is concave, then $f_{ij}$ is monotonically decreasing function, see  \cite{askey1975some}. 

		\bibliographystyle{plain}	
		\bibliography{bibliography}
	\end{document}